\newtheorem{Th}{Theorem}[section]
\newtheorem{Lem}[Th]{Lemma}
    \newcommand{\Rmnum}[1]{\expandafter\@slowromancap\romannumeral #1@}
\newenvironment{altproof}[1]
{\noindent
{\em Proof of {#1}}.}
{\nopagebreak\mbox{}\hfill $\Box$\par\addvspace{0.5cm}}
   \newcommand{\vp}{\varphi}
   \newcommand{\eps}{\varepsilon}
   \def\supp{\mathrm{supp}}
   \def\Z{\mathbb{Z}}
   \def\N{\mathbb{N}}
   \def\R{\mathbb{R}}
   \def\J{\mathcal{J}}
   \def\D{\mathcal{D}}
\newcommand{\cC}{{\mathcal C}}
\newcommand{\cN}{{\mathcal N}}
\newcommand{\cT}{{\mathcal T}}
\newcommand{\Om}{\Omega}
\newcommand{\weakto}{\rightharpoonup}
\newcommand{\cTto}{\stackrel{\cT}{\longrightarrow}}
\numberwithin{equation}{section}
\begin{document}
\title{Ground states of nonlinear Schr\"odinger equations with sum of periodic and inverse-square potentials}
\author{Qianqiao Guo \and Jaros\l aw Mederski}
\date{}
\maketitle

\begin{abstract}
We study the existence of solutions of the following nonlinear Schr\"odinger equation
\begin{equation*}
%\left\{
%\begin{array}{ll}
    -\Delta u + \Big(V(x)-\frac{\mu}{|x|^2}\Big) u = f(x,u)
 %   &
    \hbox{ for } x\in\R^N\setminus\{0\},
%\end{array}
%\right.
\end{equation*}
where $V:\R^N\to\R$ and $f:\R^N\times\R\to\R$ are periodic in $x\in\R^N$. We assume that $0$ does not lie in the spectrum of $-\Delta+V$ and $\mu<\frac{(N-2)^2}{4}$, $N\geq 3$. The superlinear and subcritical term $f$ satisfies a weak monotonicity condition. For sufficiently small $\mu\geq 0$ we find a ground state solution as a minimizer of the energy functional on a natural constraint. If $\mu<0$ and $0$ lies below the spectrum of $-\Delta+V$, then ground state solutions do not exist.
\end{abstract}

{\bf MSC 2010:} Primary: 35Q55; Secondary: 35J10, 35J20, 58E05

{\bf Key words:} Schr\"odinger equation, ground state, variational methods, strongly indefinite functional, Cerami sequence, Nehari-Pankov manifold, inverse-square potential.

\section*{Introduction}
\setcounter{section}{1}

The paper deals with the  Schr\"odinger equation
\begin{equation}\label{NSE}
%\left\{
%\begin{array}{ll}
    -\Delta u + \Big(V(x)-\frac{\mu}{|x|^2}\Big) u = f(x,u)
 %   &
    \hbox{ for } x\in\R^N\setminus\{0\}.
%\end{array}
%\right.
\end{equation}
where $V:\R^N\to\R$ is a periodic potential, $\mu<\overline{\mu}:=\frac{(N-2)^2}{4}$, $N\geq 3$, and $f:\R^N\times\R\to\R$ has superlinear and subcritical growth. The Schr\"odinger equation appears in many physical models, for instance in nonlinear optics, where propagation of light through periodic optical structures  with localized singular defect potentials has been intensively studied (see \cite{BonannoGhimentiSquassina, HolmesWeinstein} and references therein). We focus on the localized inverse-square potential $-\frac{\mu}{|x|^2}$ which arises in many other areas such as quantum mechanics, nuclear physics, molecular physics and quantum cosmology \cite{FelliMarchiniTerracini2007,FrankLandSpector,FelliTerracini2006}.

In this paper we study the nonlinearity $f$ having superlinear and subcritical growth (see (F2)-(F4) below), for instance in dimension $N=3$ in the study of self-focusing Kerr-like optical media one has  $f(x,u)=\Gamma(x)|u|^2u$ with $\Gamma\in L^{\infty}(\R^3)$ periodic, positive and bounded away from $0$ (see \cite{Pankov,Kuchment}). Nonlinear Schr\"odinger equations in $\R^N$ with the inverse-square potentials have recently attracted a considerable attention in the mathematical literature,  for example \cite{FelliJAM2009,FelliMarchiniTerracini2007,FelliMarchiniTerracini2009,FelliTerracini2006,
TerraciniADE,Smets,ChabrowskiSzulkinWillem,RuizWillem2003,DengJinPengJDE2012} and all of these works concentrate on critically growing nonlinearites having a component of the form $f(x,u)=\Gamma(x)|u|^{2^*-2}u$, where $2^*:=\frac{2N}{N-2}$ is the Sobolev exponent. This is well-justified since for $V=0, 0\le\mu<\overline{\mu}$ and $f(x,u)=|u|^{p-2}u$ with $p\neq2^*$, Terracini \cite{TerraciniADE}[
Theorem 0.1] has shown that (\ref{NSE}) admits only trivial solution 
$u=0$ in 
$\D^{1,2}(\mathbb{R}^N)\cap L^{p}(\mathbb{R}^N)$ and nontrivial solutions appear in the critical case $p=2^*$. 
Our aim is to deal with the existence of nontrivial soultions in the subcritical case. Therefore we must ensure that $V\neq 0$ and our principal assumption is that $0$ does not belong to the spectrum $\sigma(-\Delta+V)$ of the Schr\"odinger operator $-\Delta+V$. Since $\sigma(-\Delta+V)$ is bounded from below and consists of sum of disjoint closed intervals, then either $0$ lies below the spectum or $0$ lies in a finite spectral gap of the Schr\"odinger operator. In the both cases solutions to \eqref{NSE} represent the so-called standing gap solitons \cite{Pankov,Kuchment}.

Recall that if $\mu=0$ there is a broad literature treating the Schr\"odinger equations with periodic potentials $V$ in the subcritical case, see for example \cite{ZelatiRabinowitz,AlamaLi,TroestlerWillem,KryszSzulkin, Pankov,Liu,SzulkinWeth,MederskiTMNA2014,MederskiNehari2014} and references therein.
If $0\leq \mu\leq \overline{\mu}$ we may consider the following bilinear form
\begin{equation}\label{eq:DefOfB}
B_{\mu}(u,v):=\int_{\R^N}\langle \nabla u,\nabla v\rangle+\Big(V(x)-\frac{\mu}{|x|^2}\Big)uv\,dx 
\end{equation}
and to the best of our knowledge, all existing approaches to the problem \eqref{NSE} with $\mu>0$ require the positive definiteness of $B_\mu$ in a suitable function space, where solutions are looked for.
However, if $\mu\neq 0$, $V$ is periodic and nonconstant, possibly sign-changing, then $B_\mu$ may be strongly indefinite and requires more delicate approach. Moreover the singular potential $-\frac{\mu}{|x|^2}$ does 
not belong to the Kato's class \cite{ReedSimon} and cannot be treated as a lower order perturbation term of $-\Delta+V$.

In what follows, throughout the paper we assume the following conditions:

\begin{itemize}
 \item[(V)] $V\in L^{\infty}(\R^N)$, $V$ is $\Z^N$-periodic in $x\in\R^N$ and
$0\notin\sigma(-\Delta+V)$. Here $V$ is $\Z^N$-periodic in $x\in\R^N$ means $V(x+y)=V(x)$ for any $y\in \Z^N$.
\item[(F1)] $f:\R^N\times\R\to\R$ is measurable in $x\in\R^N$ and continuous in $u\in\R$ for a.e. $x\in\R^N$. Moreover $f$ is $\Z^N$-periodic in $x$.
\item[(F2)] There are $a>0$ and $2<p<2^*=\frac{2N}{N-2}$ such that
$$|f(x,u)|\leq a(1+|u|^{p-1})\hbox{ for all }u \in\R,\; x\in\R^N.$$

\item[(F3)] $f(x,u)=o(u)$ uniformly with respect to $x$ as $|u|\to0$.
\item[(F4)] $F(x,u)/u^2\to\infty$ uniformly in $x$ as $|u|\to\infty$, where $F$ is the primitive of $f$ with respect to $u$, that is $F(x,u)=\int_0^u f(x,t)dt$.
\item[(F5)]  $u\mapsto f(x,u)/|u|$ is non-decreasing on $(-\infty,0)$ and $(0,+\infty)$.
\end{itemize}

The energy functional
$\J:H^1(\R^N)\to\R$ given by
\begin{equation}\label{eqJFormula}
\J_{\mu}(u):=\frac{1}{2}\int_{\R^N}|\nabla u|^2+\Big(V(x)-\frac{\mu}{|x|^2}\Big)|u|^2\,dx-\int_{\R^N}F(x,u)\,dx
\end{equation}
is of $\cC^1$-class and its critical points correspond to solutions of \eqref{NSE}. In view of (V) spectral theory asserts that $\sigma(-\Delta+V)$ is purely continuous, bounded from below and consists of closed disjoint intervals \cite{ReedSimon}. Moreover
there is an orthogonal decomposition of $X:=H^1(\R^N)=X^+\oplus X^-$, such that $B_{0}(\cdot,\cdot)$ given by \eqref{eq:DefOfB} with $\mu=0$ is positive definite on $X^+$ and negative definite on $X^-$. If $0$ lies in a finite spectral gap, then both $X^+$ and $X^-$ are infinite dimensional and the problem is strongly indefinite. For any $u\in X$ we denote by $u^+ \in X^+$ and $u^- \in X^-$ the corresponding summands so that $u = u^++u^-$.

%\begin{align*}
%\end{align*}

We introduce the following constants
\begin{eqnarray*}
&&\hspace{-5mm}\mu(V)^+:=\sup\Big\{M>0:\;\int_{\R^N}|\nabla u|^2+V(x)|u|^2\,dx\geq M\int_{\R^N}|\nabla u|^2\, dx
\hbox{ for all } u\in X^+\Big\},\\
&&\hspace{-5mm}\mu(V)^-:=\sup\Big\{M>0:\;-\int_{\R^N}|\nabla u|^2+V(x)|u|^2\,dx\geq M\int_{\R^N}|\nabla u|^2\, dx
\hbox{ for all } u\in X^-\Big\},
\end{eqnarray*}
which play a crucial role in case $\mu\neq 0$ and \eqref{eq:DefOfB} is indefinite.
We show that $\mu(V)^+\in (0,1]$, $\mu(V)^->0$ and clearly if $V(x)\geq 0$ for a.e. $x\in\R^N$, then $\mu(V)^+=1$, $X^-=\{0\}$ and we may take $\mu(V)^-=\infty$ (see Lemma \ref{LemMu}).

Our first main result reads as follows.

\begin{Th}\label{ThMain1}
If $0\leq \mu < \frac{(N-2)^2}{4}\mu(V)^+$, then (\ref{NSE}) has a ground state solution.
\end{Th}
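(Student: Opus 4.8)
\medskip
\noindent\textbf{Proof strategy.} We may assume $\mu>0$, since for $\mu=0$ the statement is the known existence of a ground state for the periodic Schr\"odinger equation (see \cite{SzulkinWeth,MederskiNehari2014}). The plan is to produce the ground state as a minimizer of $\J_\mu$ on its Nehari--Pankov manifold; the only real difficulty is that the singular term $-\mu/|x|^2$ breaks the $\Z^N$-invariance of $\J_\mu$, so compactness of a minimizing (Cerami) sequence must be recovered by comparison with the translation invariant ``limit'' functional $\J_0$.

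\emph{Variational set-up.} First I would check that $\J_\mu$ fits the abstract Nehari--Pankov framework for strongly indefinite functionals (as in \cite{MederskiNehari2014}), using the fixed decomposition $X=X^+\oplus X^-$ associated with $-\Delta+V$. By Hardy's inequality $\int_{\R^N}\frac{|u|^2}{|x|^2}\,dx\le\frac{4}{(N-2)^2}\int_{\R^N}|\nabla u|^2\,dx$ and Lemma \ref{LemMu}, the assumption $0\le\mu<\frac{(N-2)^2}{4}\mu(V)^+$ is precisely what keeps $B_\mu$ positive definite on $X^+$, with induced norm equivalent to $\|\cdot\|_{H^1}$, while $B_\mu$ stays negative definite on $X^-$. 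This yields the Nehari--Pankov manifold $\mathcal{M}_\mu\subset X\setminus X^-$, for each $w\in X^+\setminus\{0\}$ the unique point $m_\mu(w)$ of $\mathcal{M}_\mu$ on the fiber $\widehat{E}(w):=\R^+w\oplus X^-$ which is the strict maximum of $\J_\mu$ there, the identity $c_\mu:=\inf_{\mathcal{M}_\mu}\J_\mu=\inf_{w\in X^+\setminus\{0\}}\max_{\widehat{E}(w)}\J_\mu>0$, and --- applying Ekeland's principle to the reduced functional $w\mapsto\J_\mu(m_\mu(w))$ on the unit sphere of $X^+$ --- a bounded Cerami sequence $(u_n)$ with $\J_\mu(u_n)\to c_\mu$ and $(1+\|u_n\|)\J_\mu'(u_n)\to0$; boundedness uses (F4) as usual. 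I would also record that $\widehat F(x,t):=\tfrac12 f(x,t)t-F(x,t)\ge0$ by (F5), so that $\J_\mu(v)=\int_{\R^N}\widehat F(x,v)\,dx$ whenever $\J_\mu'(v)=0$.

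\emph{The energy gap.} The key new point is the strict inequality $c_\mu<c_0$, where $c_0:=\inf_{\mathcal{M}_0}\J_0>0$ is the ground state level of the periodic problem, attained at some $u_0$ with $\J_0'(u_0)=0$ (again by \cite{SzulkinWeth,MederskiNehari2014}). Since $u_0\notin X^-$ (otherwise $B_0(u_0,u_0)=\int f(x,u_0)u_0\ge0$ would contradict negative definiteness), we have $w_0:=u_0^+\ne0$, $\widehat{E}(w_0)=\{tu_0+z:t\ge0,\ z\in X^-\}\ni u_0$, and, $u_0$ being the unique point of $\mathcal{M}_0$ on this fiber, $\max_{\widehat{E}(w_0)}\J_0=\J_0(u_0)=c_0$. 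As $\J_\mu(v)=\J_0(v)-\tfrac{\mu}{2}\int_{\R^N}\tfrac{|v|^2}{|x|^2}\,dx\le\J_0(v)$ for all $v$, the point $\bar u:=m_\mu(w_0)\in\mathcal{M}_\mu\cap\widehat{E}(w_0)$ satisfies $\J_\mu(\bar u)=\J_0(\bar u)-\tfrac{\mu}{2}\int_{\R^N}\tfrac{|\bar u|^2}{|x|^2}\,dx\le c_0-\tfrac{\mu}{2}\int_{\R^N}\tfrac{|\bar u|^2}{|x|^2}\,dx<c_0$ because $\bar u\ne0$ and $\mu>0$; hence $c_\mu\le\J_\mu(\bar u)<c_0$.

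\emph{Compactness, and conclusion.} This is the step I expect to be the main obstacle. From (F2)--(F3) together with $c_\mu>0$ one checks that $(u_n)$ does not vanish in the sense of P.-L.~Lions, so along a subsequence $\int_{B_r(y_n)}|u_n|^2\,dx\ge\delta$ for some $y_n\in\Z^N$ and fixed $r,\delta>0$. If $(y_n)$ is bounded it is eventually constant, the weak limit $u$ of $(u_n)$ in $X$ is then nonzero, and passing to the limit in $\J_\mu'(u_n)\to0$ --- the only nonstandard term, $\int\frac{u_n\varphi}{|x|^2}\,dx$, being handled by Hardy's inequality and $u_n\to u$ in $L^2_{\mathrm{loc}}$ --- yields $\J_\mu'(u)=0$, so $u\in\mathcal{M}_\mu$; then $\widehat F\ge0$, Fatou's lemma and the Cerami property give $\J_\mu(u)=\int\widehat F(x,u)\,dx\le\liminf\int\widehat F(x,u_n)\,dx=c_\mu\le\J_\mu(u)$, i.e.\ $u$ is the desired ground state. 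If instead $|y_n|\to\infty$, set $\tilde u_n:=u_n(\cdot+y_n)\weakto\tilde u\ne0$ in $X$; for $\varphi\in C_c^\infty(\R^N)$ one has $\int\frac{\tilde u_n\varphi}{|x+y_n|^2}\,dx\to0$ (its weight is $O(|y_n|^{-2})$ on $\mathrm{supp}\,\varphi$), so by the $\Z^N$-invariance of $\J_0$ and the compactness of the nonlinearity on $\mathrm{supp}\,\varphi$, $\J_0'(\tilde u_n)[\varphi]=\J_\mu'(u_n)[\varphi(\cdot-y_n)]+o(1)\to0$, whence $\J_0'(\tilde u)=0$ and $\tilde u\in\mathcal{M}_0$. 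But then, using periodicity of $\widehat F$ and Fatou's lemma, $c_\mu=\lim\int_{\R^N}\widehat F(x,u_n)\,dx=\lim\int_{\R^N}\widehat F(x,\tilde u_n)\,dx\ge\int_{\R^N}\widehat F(x,\tilde u)\,dx=\J_0(\tilde u)\ge c_0$, contradicting the energy gap. Hence only the first alternative can occur, and $\J_\mu$ attains its least energy on $\mathcal{M}_\mu$. The two delicate points are thus the verification of the abstract framework in the indefinite regime --- where the bound $\mu<\frac{(N-2)^2}{4}\mu(V)^+$ is used --- and, above all, the strict gap $c_\mu<c_0$, which is exactly what excludes the escaping scenario created by the loss of translation invariance.
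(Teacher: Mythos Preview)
Your overall strategy matches the paper's: obtain a bounded Cerami sequence at the level $c_\mu$, prove the strict gap $c_\mu<c_0$ using the ground state $u_0$ of $\J_0$, and use the gap to rule out loss of compactness. Your compactness step is in fact leaner than the paper's: rather than the full profile decomposition of Lemma~\ref{LemStruwe}, you run a single Lions dichotomy and, in the escaping case, produce a nontrivial critical point $\tilde u$ of $\J_0$ and conclude $c_\mu=\lim\int\widehat F(x,u_n)\ge\int\widehat F(x,\tilde u)=\J_0(\tilde u)\ge c_0$ via Fatou, which already contradicts the gap. This suffices for Theorem~\ref{ThMain1}; the paper's decomposition yields more information but is not needed here.

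There is, however, a genuine gap in how you produce the Cerami sequence. You invoke the Szulkin--Weth reduction: a \emph{unique} maximizer $m_\mu(w)$ on each fiber $\R^+w\oplus X^-$, and then Ekeland's principle for the reduced functional $w\mapsto\J_\mu(m_\mu(w))$ on the unit sphere of $X^+$. This needs $u\mapsto f(x,u)/|u|$ to be \emph{strictly} increasing, whereas (F5) only assumes it is non-decreasing; the maximizer on a fiber need not be unique, $m_\mu$ need not be well defined, and the reduced functional need not be $\cC^1$. The paper flags this explicitly in the introduction as the reason the Szulkin--Weth method fails here. A related imprecision is your claim that the problem fits the standard framework because $B_\mu$ is positive definite on $X^+$ and negative on $X^-$: that is true, but $B_\mu$ is \emph{not} block-diagonal for the fixed splitting $X=X^+\oplus X^-$, since the Hardy term couples the two summands. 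Both issues are handled by the abstract linking Theorem~\ref{ThAbstract} (from \cite{MederskiNehari2014}): one verifies (A1)--(A5) directly via Lemmas~\ref{LemGeometry} and~\ref{Lem_IneqForJ}, and the theorem delivers a Cerami sequence at level $c_\mu$ together with $c_\mu=\inf_{\cN_\mu}\J_\mu$, with no uniqueness on fibers required. Similarly, in your energy-gap step, replace ``$u_0$ being the unique point of $\cN_0$ on this fiber'' by the inequality $\J_0(u_0)\ge\J_0(tu_0+v)$ of Lemma~\ref{Lem_IneqForJ}; with these two substitutions your argument goes through unchanged.
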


\noindent By a {\em ground state} solution, we mean a nontrivial critical point $u$ of $\J_\mu$
such that 
$$\J_{\mu}(u)=\inf_{\cN_\mu}\J_{\mu}>0,$$ 
where 
$$\cN_{\mu}:=\{u\in X\setminus X^-:\;\J'_{\mu}(u)(u)=0\hbox{ and }\J'_{\mu}(u)(v)=0\hbox{ for any }v\in X^-\}$$ stands for the Nehari-Pankov manifold introduced in \cite{Pankov} and generalized approaches can be found in \cite{SzulkinWeth,BartschMederskiARMA2014,MederskiNehari2014}. The set $\cN_{\mu}$ is a natural constraint and since it contains all nontrivial critical points, then any ground state solution is a nontrivial critical point with the least possible energy $\J_{\mu}$. 

The existence of ground states of \eqref{NSE} without the Hardy term ($\mu=0$) has been recently obtained by Szulkin and Weth \cite{SzulkinWeth} under more restrictive assumptions, in particular they assumed that $u\mapsto f(x,u)/|u|$ is strictly increasing on $(-\infty,0)$ and $(0,+\infty)$. Moreover Liu in \cite{Liu} has assumed the weak monotonicity condition (F5) and a least energy solution to \eqref{NSE} has been obtained. The existence of ground states in case $\mu=0$ and under assumptions (V), (F1)-(F5) follows from \cite{MederskiNehari2014}[Theorem 1.1].
Below we provide comparison of ground states levels of $\J_\mu$ and $\J_0$. Moreover we obtain a nonexistence result in case $X^-=\{0\}$, $\mu<0$. The behaviour of ground state solutions in the limit $\mu\to 0^+$ requires the following additional technical condition. 

\begin{itemize}
\item[(F6)] $f(x,\cdot)$ is of $\cC^1$ class for a.e. $x\in\R^N$ and there are $b>0$ and $2<q\leq p$ such that 
$$f(x,u)u-2F(x,u)\geq b|u|^q\hbox{ for all }u\in\R,\;x\in\R^N,$$
where $p$ is the same one appearing in (F2).
\end{itemize}

\begin{Th}\label{ThMain2} 
Suppose that $-\frac{4}{(N-2)^2}\mu(V)^-<\mu <\frac{(N-2)^2}{4}\mu(V)^+$,  $u_\mu$ is a ground state of $\J_\mu$ and $u_0$ is a ground state of $\J_0$.\\ 
(a) Then there are $t>0$ and $v\in X^-$ such that $tu_{\mu}+v\in\cN_0$ and
\begin{equation}\label{ConditonOnmu2}
\inf_{\cN_0}\J_{0}\leq \inf_{\cN_\mu}\J_{\mu}+\frac12\int_{\R^N}\frac{\mu}{|x|^2}|tu_{\mu}+v|^2\,dx.
\end{equation}
If $\mu\geq 0$, then there are $t>0$ and $v\in X^-$ such that $tu_{0}+v\in\cN_\mu$ and 
\begin{equation}\label{ConditonOnmu1}
\inf_{\cN_\mu}\J_{\mu}\leq \inf_{\cN_0}\J_{0}-\frac12\int_{\R^N}\frac{\mu}{|x|^2}|tu_{0}+v|^2\,dx.
\end{equation}
Moreover
\begin{equation}\label{ThMain2conv}
\lim_{\mu\to 0^+}\inf_{\cN_\mu}\J_{\mu}=\inf_{\cN_0}\J_{0}.
\end{equation}
(b) If in addition (F6) holds and $\mu_n\to 0^+$, then there is a sequence $(x_n)\subset\Z^N$ such that $u_{\mu_n}(\cdot+x_n)$ tends to a ground state $u$ of $\J_0$ in the strong topology of $X$ as $n\to\infty$.\\
(c) If $\mu<0$ and $0<\inf\sigma(-\Delta +V)$, then $\J_\mu$ has no ground states.
\end{Th}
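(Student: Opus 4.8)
The plan is to argue by contradiction. The point is that for $\mu<0$ the singular term contributes a strictly positive amount $\frac{|\mu|}{2}\int_{\R^N}|u|^2/|x|^2\,dx$ to the energy of any nontrivial $u$, whereas this quantity can be made arbitrarily small by pushing a function off to infinity along $\Z^N$, where $\J_0$ is unchanged by periodicity; so a putative minimizer of $\J_\mu$ would always be beaten by a far-away translate, which is impossible.

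First I would set the stage. Since $0<\inf\sigma(-\Delta+V)$, the form $B_0$ is positive definite and coercive on $X=H^1(\R^N)$, hence $X^-=\{0\}$, $X^+=X$, and $\cN_\mu$ is the usual Nehari manifold $\{u\in X\setminus\{0\}:\J_\mu'(u)(u)=0\}$. Because $\mu<0$, Hardy's inequality guarantees $\int_{\R^N}|u|^2/|x|^2\,dx<\infty$ for $u\in X$ and gives the splitting
$$\J_\mu(u)=\J_0(u)+\frac{|\mu|}{2}\int_{\R^N}\frac{|u|^2}{|x|^2}\,dx,\qquad B_\mu(u,u)=B_0(u,u)+|\mu|\int_{\R^N}\frac{|u|^2}{|x|^2}\,dx;$$
in particular $B_\mu$ is positive definite and $\J_\mu(u)>\J_0(u)$ for every $u\neq0$. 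From (F3)--(F5) (the Nehari calculus already developed in the paper, specialized to $X^-=\{0\}$) I get: for each $u\neq0$ the map $t\mapsto\J_\mu(tu)$ is positive for small $t>0$, tends to $-\infty$, and attains its maximum exactly on the set where $tu\in\cN_\mu$; hence $\cN_\mu\neq\emptyset$, $\max_{t>0}\J_\mu(tu)\ge\inf_{\cN_\mu}\J_\mu=:c_\mu>0$ for all $u\neq0$, and every nontrivial critical point $v$ of $\J_\mu$ satisfies $\J_\mu(v)=\max_{t>0}\J_\mu(tv)$. The same statements hold for $\J_0$, with level $c_0:=\inf_{\cN_0}\J_0>0$; moreover, by Theorem~\ref{ThMain1} with $\mu=0$, $\J_0$ has a ground state $u_0$, so $\max_{t>0}\J_0(tu_0)=\J_0(u_0)=c_0$.

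Next I would prove $c_\mu\le c_0$. For $k\in\Z^N$ put $\phi_k:=u_0(\cdot-k)$ and $\eps_k:=\int_{\R^N}|\phi_k|^2/|x|^2\,dx$. The key estimate is $\eps_k\to0$ as $|k|\to\infty$: this is immediate for $u_0\in C_c^\infty$ since $\supp\phi_k$ moves to infinity, and the general case follows by approximating $u_0$ in $H^1$ and bounding the error by Hardy's inequality centered at $-k$. Using (F4) one checks that $t\mapsto\J_\mu(t\phi_k)$ is negative for $t$ larger than some $T$ that is \emph{independent of $k$} (here it matters that each $\phi_k$ is a translate of the fixed function $u_0$, whose distribution function is the same for all $k$), so $\max_{t>0}\J_\mu(t\phi_k)$ is attained in $[0,T]$. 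Therefore, using the $\Z^N$-invariance of $\J_0$,
$$c_\mu\le\max_{t>0}\J_\mu(t\phi_k)=\max_{t\in[0,T]}\Big(\J_0(t\phi_k)+\frac{|\mu|t^2}{2}\eps_k\Big)\le\max_{t>0}\J_0(t\phi_k)+\frac{|\mu|T^2}{2}\eps_k=c_0+\frac{|\mu|T^2}{2}\eps_k .$$
Letting $|k|\to\infty$ gives $c_\mu\le c_0$.

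Finally, suppose $\J_\mu$ had a ground state $u_\mu$. Being a nontrivial critical point, $u_\mu\in\cN_\mu$ and $\J_\mu(u_\mu)=\max_{t>0}\J_\mu(tu_\mu)=c_\mu$. Choose $t^*>0$ with $t^*u_\mu\in\cN_0$; since $u_\mu\neq0$ we have $\int_{\R^N}|u_\mu|^2/|x|^2\,dx>0$, so
$$c_0\le\J_0(t^*u_\mu)=\J_\mu(t^*u_\mu)-\frac{|\mu|(t^*)^2}{2}\int_{\R^N}\frac{|u_\mu|^2}{|x|^2}\,dx<\J_\mu(t^*u_\mu)\le\J_\mu(u_\mu)=c_\mu ,$$
which contradicts $c_\mu\le c_0$. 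Hence $\J_\mu$ has no ground state. The only nonroutine points are the decay $\eps_k\to0$ and the $k$-uniform bound $T$ on the location of the maximum; everything else is the Nehari-manifold bookkeeping already in place.
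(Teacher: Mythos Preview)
Your proof is correct and follows essentially the same route as the paper: both derive the strict inequality $c_0<c_\mu$ from a putative ground state $u_\mu$ (this is the content of \eqref{ConditonOnmu2} specialized to $X^-=\{0\}$), and both derive $c_\mu\le c_0$ by testing $\J_\mu$ on far translates of a ground state $u_0$ of $\J_0$ and using the decay $\int_{\R^N}|u_0(\cdot+y)|^2/|x|^2\,dx\to0$ (Lemma~\ref{LemIntegralConvergence} in the paper). The only cosmetic difference is that the paper extracts a limiting Nehari parameter $t_{y_n}\to t_0$ and identifies $t_0u_0\in\cN_0$, whereas you bound the maximum directly via a $k$-uniform ceiling $T$; your version avoids the limit passage and is slightly more direct, but the underlying idea is identical.
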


The problem of the existence of ground states in case $\mu<0$ and $\inf\sigma(-\Delta +V)<0$ remains open.

The paper is organized as follows. In the next section we formulate our problem in a variational setting and we investigate the properties of the Nehari-Pankov manifold on which we minimize $\J_\mu$ to find a ground state. Since $\cN_\mu$ need not to be of class $\cC^1$, we are not able to apply the standard minimizing method of $\J_\mu$ on  $\cN_\mu$. The method of Szulkin and Weth \cite{SzulkinWeth} fails as well due to the weak monotonicity condition (F5). Our approach uses a linking argument and we find a Cerami sequence by means of the critical point theory developed in\cite{MederskiNehari2014}[Section 2]. Next, in Section \ref{sec:ProofTh1} we prove Theorem \ref{ThMain1} and in Section \ref{sec:ProofTh2} we prove Theorem \ref{ThMain2}. The lack of compactness of Cerami sequences requires decompositions of these sequences  which is provided in Lemma \ref{LemStruwe} and proved in the last Section \ref{sec:Decomposition}.

\section{Variational setting}

In view of the spectral theory \cite{ReedSimon} we may introduce a new inner product in $X=H^1(\R^N)$ by the following formula
$$\langle u,v\rangle :=\int_{\R^N}\langle \nabla u^+,\nabla v^+\rangle+V(x)\langle u^+,v^+\rangle\;dx-
\int_{\R^N}\langle \nabla u^-,\nabla v^-\rangle+V(x)\langle u^-,v^-\rangle\;dx$$
and a norm given by
$$\|u\|^2:=\langle u,u\rangle,$$
which is equivalent with the usual Sobolev norm in $H^1(\R^N)$, that is 
\begin{equation*}
\|u\|^2_{H^1(\R^N)}=\int_{\R^N}|\nabla u|^2+u^2\,dx.
\end{equation*}  
Then
$X^+$ and $X^-$ are orthogonal with respect to the inner product $\langle\cdot,\cdot\rangle$ as well. If $\inf\sigma(-\Delta+V)>0$ we have $X^-=\{0\}$ and 
$$\|u\|^2 =\int_{\R^N}|\nabla u|^2+V(x)u^2\,dx.$$

\begin{Lem}\label{LemMu}
$\mu(V)^+\in (0,1]$ and $\mu(V)^->0$.
\end{Lem}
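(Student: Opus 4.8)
The plan is to analyze the two constants separately, using the variational characterization of the spectral decomposition. First consider $\mu(V)^+$. For the upper bound $\mu(V)^+\leq 1$, I would observe that since $V\in L^\infty(\R^N)$ and $V$ need not be nonnegative, one cannot in general expect $\int |\nabla u|^2+V|u|^2 \geq \int|\nabla u|^2$; more precisely, I would test with functions $u\in X^+$ whose mass concentrates where $V$ is ``most negative'' — e.g. translates $u_n = \varphi(\cdot - y_n)$ projected onto $X^+$, where $y_n$ is chosen so that the local average of $V$ near $y_n$ approaches $\mathrm{ess\,inf}\,V$, or simply highly oscillatory functions for which $\int|\nabla u|^2$ dominates but one still gets $\int V|u|^2 \leq (\mathrm{ess\,sup}_{\mathrm{near}} V)\int|u|^2$. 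Actually the cleanest route: for $u\in X^+$ one has $\int|\nabla u|^2 + V|u|^2 = \|u\|^2$ by definition of the inner product on $X^+$, so $\mu(V)^+$ is the best constant $M$ with $\|u\|^2 \geq M\int|\nabla u|^2$ on $X^+$, i.e. $\mu(V)^+ = \inf_{u\in X^+\setminus\{0\}} \|u\|^2/\int|\nabla u|^2$; since $\int|\nabla u|^2 \leq \int|\nabla u|^2 + u^2 = \|u\|^2_{H^1}$ which is equivalent to $\|u\|^2$, the ratio is bounded below by a positive constant, giving $\mu(V)^+>0$, while testing against a suitable sequence with $\int u^2 / \int|\nabla u|^2\to 0$ (high-frequency functions) forces the ratio $\to$ a value $\leq 1$, hence $\mu(V)^+\leq 1$.

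For $\mu(V)^+>0$ I would spell out: by the equivalence of $\|\cdot\|$ and $\|\cdot\|_{H^1}$ there is $c>0$ with $\|u\|^2 \geq c\|u\|_{H^1}^2 \geq c\int|\nabla u|^2$ for all $u\in X$, in particular for $u\in X^+$; thus $\mu(V)^+ \geq c > 0$. The same equivalence applied on $X^-$ gives $-\int|\nabla u|^2+V|u|^2 = \|u\|^2 \geq c\int|\nabla u|^2$ for $u\in X^-$, whence $\mu(V)^- \geq c > 0$. So both positivity statements reduce to the already-available norm equivalence, and the only genuinely new point is the upper bound $\mu(V)^+\leq 1$.

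The main obstacle — really the only nontrivial step — is producing the test sequence that certifies $\mu(V)^+\leq 1$. The subtlety is that these functions must lie in $X^+$, not merely in $H^1$. I would handle this by taking any sequence $(w_n)\subset H^1$ with $\int|\nabla w_n|^2 = 1$ and $\int w_n^2 \to 0$ (e.g. $w_n(x) = n^{N/2-1}\varphi(nx)$ for fixed $\varphi\in C_c^\infty$, rescaled so the Dirichlet integral is normalized), then projecting: $w_n^+ \in X^+$. One checks $\|w_n^- \|_{H^1}\to 0$ using that $V w_n \to 0$ in an appropriate sense and continuity of the spectral projection in $H^{-1}$ is not literally available, so instead I would argue that $\|w_n^-\|^2 = -B_0(w_n^-,w_n^-) = -B_0(w_n,w_n^-) = -\int \nabla w_n\nabla w_n^- + V w_n w_n^- \leq \|\nabla w_n\|_2\|\nabla w_n^-\|_2 + \|V\|_\infty \|w_n\|_2\|w_n^-\|_2$, and combine with $\|w_n^-\|^2 \geq c\|w_n^-\|_{H^1}^2$ to get $\|w_n^-\|_{H^1} \leq C(\|\nabla w_n\|_2^{?}\cdots)$ — careful bookkeeping here shows $w_n^-$ stays bounded but one needs $\int|\nabla w_n^+|^2$ to still tend to $1$; since $\int w_n^2 \to 0$ one gets $\|w_n\|^2 = \|w_n^+\|^2 - \|w_n^-\|^2$ and $\|w_n\|^2 \leq \|w_n\|_{H^1}^2\cdot C' \leq C'(1 + o(1))$. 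Then $\mu(V)^+ \leq \|w_n^+\|^2/\int|\nabla w_n^+|^2$, and one shows the numerator over the denominator tends to a limit $\leq 1$ by tracking that $\int|\nabla w_n^+|^2 \to 1$ and $\|w_n^+\|^2 = \int|\nabla w_n^+|^2 + \int V (w_n^+)^2 \leq 1 + o(1) + \|V\|_\infty \int (w_n^+)^2 = 1 + o(1)$, using $\int(w_n^+)^2 \leq \|w_n^+\|_{H^1}^2$ which one must also show stays controlled — this is the fiddly part, but it goes through because $w_n^- \to 0$ in $H^1$ (from the estimate above, bootstrapping $\|w_n^-\|_{H^1}^2 \leq C\|w_n^-\|_{H^1}(\|\nabla w_n\|_2 + \|w_n\|_2)$ gives $\|w_n^-\|_{H^1} \leq C(\|\nabla w_n\|_2 + \|w_n\|_2)$, which is bounded, not small — so one should instead just accept $w_n^-$ bounded and note $w_n^+ = w_n - w_n^-$ has $\|w_n^+\|_{H^1}$ bounded, Dirichlet integral converging to $1$ after passing to a subsequence, and $\int(w_n^+)^2$ bounded; then $\|w_n^+\|^2 \leq \int|\nabla w_n^+|^2 + \|V\|_\infty\int(w_n^+)^2$, and dividing, the lim sup of the ratio is $\leq 1 + \|V\|_\infty\limsup\int(w_n^+)^2/\int|\nabla w_n^+|^2$ — not obviously $\leq 1$). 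Given these complications I would instead present the upper bound via the direct observation that $\mu(V)^+ \leq 1$ is equivalent to the existence of $u\in X^+$ with $\int V|u|^2 \leq 0$ (then the ratio is $\leq 1$), which holds unless $V\geq 0$ a.e. on a set of full measure relative to $X^+$-functions; and in the remaining case $V\geq 0$ a.e. one has $X^-=\{0\}$, $X^+=H^1$, and $\mu(V)^+ = \inf \|u\|^2/\int|\nabla u|^2 \leq \lim \int|\nabla w_n|^2 + V w_n^2 / \int |\nabla w_n|^2 = 1 + \lim \|V\|_\infty \int w_n^2 = 1$ for the concentrating sequence, cleanly giving $\mu(V)^+\leq 1$. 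I would write up whichever of these two cases-based arguments is shortest.
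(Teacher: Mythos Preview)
Your treatment of the positivity of $\mu(V)^\pm$ is correct and is exactly the paper's argument: the equivalence of $\|\cdot\|$ with $\|\cdot\|_{H^1}$ gives $B_0(u,u)\ge C\|u\|_{H^1}^2\ge C\int_{\R^N}|\nabla u|^2\,dx$ on $X^+$, and the analogous bound on $X^-$.

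The upper bound $\mu(V)^+\le 1$ is where your plan diverges and runs into a genuine gap. Your main line tests with high-frequency $w_n$ (so $\int|\nabla w_n|^2=1$, $\int w_n^2\to 0$) projected onto $X^+$, but as you yourself notice the projection does not cooperate: your estimate only gives $\|w_n^-\|_{H^1}$ bounded, not small, hence $\int (w_n^+)^2$ is merely bounded and the ratio $\|w_n^+\|^2/\int|\nabla w_n^+|^2$ need not approach $1$. Your fallback case split is also not a dichotomy: ``either some $u\in X^+$ has $\int V|u|^2\le 0$, or else $V\ge 0$ a.e.\ and $X^-=\{0\}$'' fails in the genuinely indefinite situation $X^-\neq\{0\}$, where $V$ may be sign-changing while $\int V|u|^2>0$ for every nonzero $u\in X^+$ (elements of the spectral subspace $X^+$ are constrained and need not localize on $\{V<0\}$). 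So the indefinite case remains untreated. Your concentrating-sequence argument is fine when $X^-=\{0\}$, but that is the easy case.

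The paper does this in two lines by contradiction. Assuming $\mu(V)^+>1$, one has $\int V|u|^2\ge(\mu(V)^+-1)\int|\nabla u|^2$ on $X^+$; picking a bounded $\Omega$ with $\sup_\Omega V=V_0>0$ and a test function supported in $\Omega$ whose Dirichlet energy is large relative to $V_0(\mu(V)^+-1)^{-1}\int|u|^2$ gives
\[
V_0\int_\Omega|u|^2\ \ge\ \int_{\R^N}V|u|^2\ \ge\ (\mu(V)^+-1)\int_\Omega|\nabla u|^2,
\]
contradicting the choice of $u$. The point is that arguing by contradiction requires only \emph{one} test function with a prescribed Dirichlet/$L^2$ ratio, rather than an entire sequence with carefully tracked asymptotics under the spectral projection; this sidesteps precisely the bookkeeping that derails your high-frequency approach.
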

\begin{proof}
Since $B_0(\cdot,\cdot)$ is positive definite on $X^+$, then
$$B_0(u,u)\geq C \|u\|^2_{H^1}\geq C \int_{\R^N}|\nabla u|^2\,dx$$
for some constant $C>0$ and all $u\in X^+$. Thus $\mu(V)^+\geq C>0$. Similarly, $B_0(\cdot,\cdot)$ is negative definite on $X^-$ and
$\mu(V)^->0$.
Now suppose that $\mu(V)^+>1$, take any open and bounded $\Om\subset\R^N$ such that $\sup_{x\in\Om}V(x)=V_0>0$. 
Let us take any $u\in X$ such that $\supp(u)\subset \Om$ and
\begin{eqnarray}\label{eq:proofmu}
\int_{\Om}|\nabla u|^2\,dx<V_0(\mu(V)^+-1)^{-1}\int_{\Om}|u|^2\,dx.
\end{eqnarray}
Observe that
\begin{eqnarray*}
V_0\int_{\Om}|u|^2\,dx\geq\int_{\R^N}V(x)|u|^2\,dx
\geq (\mu(V)^+-1)\int_{\R^N}|\nabla u|^2\,dx =(\mu(V)^+-1)\int_{\Om}|\nabla u|^2\,dx
\end{eqnarray*}
and we obtain a contradiction with \eqref{eq:proofmu}.
\end{proof}

In addition to the norm topology $\|\cdot\|$ we need the topology $\cT$ on $X$ which is induced by the norm
$$\|u\|_\cT:=\max\Big\{\|u^+\|, \sum_{k=1}^\infty\frac{1}{2^{k+1}}|\langle u^-,e_k\rangle|\Big\},$$
where $(e_k)$ stands for a total orthonormal sequence in $X^-$. Recall that \cite{KryszSzulkin,Willem,BartschDing,MederskiNehari2014} 
$$\|u^-\|\leq\|u\|_\cT\leq \|u\|\hbox{ for }u\in X$$
and on bounded subsets of $X$ the topology $\cT$ coincides with
the
product of the norm topology in $X^+$ and the weak topology in $X^-$. The convergence of a sequence in $\cT$ topology will be denoted by $u_n\cTto u$.

\indent Assumptions (V), (F1) and (F2) allow to consider the energy functional $\J_\mu$ associated to (\ref{NSE})
which is a well-defined $\cC^1$-map. Moreover critical points of $\J_\mu$ correspond to  solutions to (\ref{NSE}). Observe that (F3) and (F5) imply that
\begin{equation}\label{ARfor2}
f(x,u)u\geq 2F(x,u)\geq 0\hbox{ for }x\in\R^N,\;u\in \R. 
\end{equation}
Hence, if $t\in\R$, $u_n\cTto u_0$ and $\J_\mu(u_n)\geq t$ for any $n\geq 1$, then $t\leq \frac{1}{2}\big(\|u_n^+\|^2-\|u_n^-\|^2\big)$ and $\|u_n\|$ is bounded. Therefore for $\mu\geq 0$ one easily verifies  
the following conditions:
\begin{itemize}
\item[(A1)] $\J_\mu$ is $\cT$-upper semicontinuous, that is $\J_\mu^{-1}([t,\infty))$ is $\cT$-closed for any $t\in\R$. 

\item[(A2)] $\J'_\mu$ is $\cT$-to-weak$^*$ continuous in $\J_\mu^{-1}([0,\infty))$, that is $\J'_\mu(u_n)\weakto\J'_\mu(u_0)$ provided that $u_n\cTto u_0$ and $\J_{\mu}(u_n)\geq 0$ for $n\geq 0$.
\end{itemize}

In Lemma \ref{LemGeometry} below  we will check the geometrical conditions (A3) and (A4) provided that $0\leq \mu <\frac{(N-2)^2}{4}\mu(V)^+$.

\begin{itemize}
\item[(A3)] There exists $r>0$ such that $m:=\inf\limits_{u\in X^+:\|u\|=r} \J_\mu(u)>0$.
\item[(A4)] For every $u\in X\setminus X^-$ there exists $R(u)>r$ such that
$$\sup_{\partial M(u)}\J_\mu\leq \J_\mu(0)=0,$$
where
\begin{equation}\label{DefOfM}
M(u):=\{t u+v \in X |\; v \in X^-,\;
\|tu+v\|\leq R(u),\; t\geq0 \}.
\end{equation}
\end{itemize}
We also require the following condition which is implied by Lemma \ref{Lem_IneqForJ} below for $\mu> -\frac{4}{(N-2)^2}\mu(V)^-$.
\begin{itemize}
\item[(A5)] If $u\in \cN_{\mu}$ then $\J_{\mu}(u)\geq \J_{\mu}(tu+v)$ for $t\geq 0$ and $v\in X^-$.
\end{itemize}

Finally we intend to apply the following linking theorem  obtained by the second author in \cite{MederskiNehari2014} (cf. \cite{BartschDing,LiSzulkin,WillemZou}).

\begin{Th}\label{ThAbstract}
If $\J_{\mu}\in\cC^1(X,\R)$ satisfies (A1)-(A4), then there exists a Cerami sequence $(u_n)$ at level $c_{\mu}$, that is
$\J_\mu(u_n)\to c_\mu$ and $(1+\|u_n\|)\J'_\mu(u_n)\to 0$,
such that
$$0<m\leq c_\mu\leq \inf_{\cN_{\mu}}\J_{\mu}$$
and
$$
c_\mu:=\inf_{u\in X\setminus X^-}\inf_{h\in\Gamma(u)} \sup_{u'\in M(u)} \J_\mu(h(u',1)),$$
where
$\Gamma(u)$ consists of $h\in \cC(M(u)\times [0,1])$ satisfying the following conditions
\begin{itemize}
\item[(h1)] $h$ is $\cT$ -continuous (with respect to norm $\|\cdot\|_\cT$);
\item[(h2)] $h(u,0)=u$ for all $u\in M(u)$;
\item[(h3)] $\J_\mu(u)\geq \J_\mu(h(u,t))$ for all $(u,t)\in M(u)\times [0,1]$;
%\item[(h4)] $h(I\times \partial M(u^+_0))\cap S^+_r=\emptyset$;
\item[(h4)] each $(u,t)\in M(u)\times [0,1]$ has an open neighborhood $W$ in the product topology of $(X,\cT)$ and $[0,1]$ such that the set $\{v-h(v,s):(v,s)\in W\cap(M(u)\times [0,1])\}$ is contained in a finite-dimensional subspace of $X$. 
\end{itemize}
Moreover $c_\mu=\inf_\cN\J_\mu$ provided that $c_\mu\geq\J_\mu(u)$ for some critical point $u\in X\setminus X^-$ and (A5) additionally holds.
\end{Th}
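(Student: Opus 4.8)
Although Theorem~\ref{ThAbstract} is taken from \cite{MederskiNehari2014}, we indicate the strategy of its proof, which follows the deformation-theoretic scheme for strongly indefinite functionals. Recall that in the situation at hand $\J_\mu(u)=\tfrac12\|u^+\|^2-\tfrac12\|u^-\|^2-\Psi_\mu(u)$ with $\Psi_\mu(u)=\int_{\R^N}F(x,u)\,dx+\tfrac12\int_{\R^N}\tfrac{\mu}{|x|^2}|u|^2\,dx\ge 0$ when $\mu\ge 0$ (by \eqref{ARfor2}), and this quadratic-minus-nonnegative structure is what stands behind (A1)--(A4). The first step is to check that $c_\mu$ is well defined and $c_\mu\ge m>0$. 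Fix $u\in X\setminus X^-$, write $S_r^+:=\{w\in X^+:\|w\|=r\}$ with the $r$ from (A3), and let $h\in\Gamma(u)$. One shows that $h(M(u),1)\cap S_r^+\neq\emptyset$; then by (A3), $\sup_{M(u)}\J_\mu(h(\cdot,1))\ge\inf_{S_r^+}\J_\mu=m$, and taking infima over $h$ and $u$ gives $c_\mu\ge m$.

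The intersection claim is the topological heart of the argument. Identifying $M(u)$ with a half-ball in $\R u^+\oplus X^-$ whose boundary $\partial M(u)$ carries $\J_\mu\le 0$ (the face $t=0$ because there $\J_\mu=-\tfrac12\|v\|^2-\Psi_\mu(v)\le 0$, the spherical face by (A4)), consider the map $g_s(z):=\big(\|h(z,s)^+\|-r,\,h(z,s)^-\big)$. By (h3), $\J_\mu(h(z,s))\le\J_\mu(z)\le 0<m$ for $z\in\partial M(u)$, so $h(z,s)\notin S_r^+$ there and $g_s$ is zero-free on $\partial M(u)$ for all $s\in[0,1]$; hence $\{g_s\}$ is an admissible homotopy. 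Condition (h4) lets one carry this out inside a fixed finite-dimensional slice, so Brouwer degree applies: $\deg(g_0,M(u),0)=\pm1$ since $g_0(z)=0$ has the single nondegenerate solution $t=r/\|u^+\|$, $v=-tu^-$, whence $\deg(g_1,M(u),0)=\pm1$ and $g_1$ has a zero, i.e. $h(z,1)\in S_r^+$ for some $z$. For the upper bound, note that $\mathrm{id}_{M(u)}$ (composed with the constant homotopy) lies in $\Gamma(u)$ for every $u$; hence for $w\in\cN_\mu$, choosing $u=w$ gives $c_\mu\le\sup_{M(w)}\J_\mu=\sup_{t\ge0,\,v\in X^-}\J_\mu(tw+v)=\J_\mu(w)$ by (A5), and minimizing over $\cN_\mu$ yields $c_\mu\le\inf_{\cN_\mu}\J_\mu$.

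To obtain a Cerami sequence at level $c_\mu$ we argue by contradiction. If there is none, there are $\eps,\delta>0$ with $(1+\|u\|)\|\J'_\mu(u)\|\ge\delta$ on $\J_\mu^{-1}([c_\mu-2\eps,c_\mu+2\eps])$. Using (A1)--(A2) to control the $\cT$-topology on sublevel sets, one constructs a locally Lipschitz vector field $W$ supported near this strip with $\|W(u)\|\lesssim(1+\|u\|)^{-1}$, $\J'_\mu(u)[W(u)]\gtrsim\|\J'_\mu(u)\|$, and such that the associated negative flow $\eta$ is $\cT$-continuous and moves points by vectors lying, locally, in a finite-dimensional subspace (this uses a finite-dimensional approximation of $\Psi'_\mu$ together with the fact that on bounded sets $\cT$ agrees with the product topology). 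Then $\eta$ decreases $\J_\mu$, so composing a near-optimal $h\in\Gamma(u)$ with $\eta$ produces another element of $\Gamma(u)$ with $\sup_{M(u)}\J_\mu(\cdot,1)<c_\mu$, contradicting the definition of $c_\mu$. This construction is the main obstacle: one must simultaneously guarantee $\cT$-continuity of the flow (for (h1)), the local finite-dimensionality (h4), and genuine descent in $\J_\mu$, and it is precisely the Cerami weight $(1+\|u\|)$, rather than a plain Palais--Smale bound, that keeps the flow controllable on the unbounded parts of the strip.

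Finally, for the last assertion, let $u\in X\setminus X^-$ be a critical point of $\J_\mu$ with $c_\mu\ge\J_\mu(u)$. Then $\J'_\mu(u)=0$ forces $u\in\cN_\mu$ by the definition of the Nehari--Pankov set, so $\inf_{\cN_\mu}\J_\mu\le\J_\mu(u)\le c_\mu$; combined with $c_\mu\le\inf_{\cN_\mu}\J_\mu$ (proved above using (A5)), this gives $c_\mu=\inf_{\cN_\mu}\J_\mu$.
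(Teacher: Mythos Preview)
The paper does not give its own proof of Theorem~\ref{ThAbstract}: it is quoted as a black box from \cite{MederskiNehari2014} (with antecedents in \cite{BartschDing,LiSzulkin,KryszSzulkin,WillemZou}), so there is no in-text argument to compare your sketch against. That said, your outline is a faithful account of the strategy used in those references, and the logical skeleton---intersection lemma $\Rightarrow$ $c_\mu\ge m$, identity homotopy $\Rightarrow$ $c_\mu\le\inf_{\cN_\mu}\J_\mu$ via (A5), deformation contradiction $\Rightarrow$ Cerami sequence---is correct.

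One point deserves more care. In your intersection step you write that ``(h4) lets one carry this out inside a fixed finite-dimensional slice, so Brouwer degree applies''. This is too quick: the map $g_s(z)=(\|h(z,s)^+\|-r,\,h(z,s)^-)$ takes values in $\R\times X^-$, and $X^-$ is infinite-dimensional, so Brouwer degree is not available on the nose. What (h4) actually buys is that $h(\cdot,s)$ is a \emph{locally} finite-dimensional perturbation of the identity, and one then invokes a generalized degree adapted to such maps (as in \cite{KryszSzulkin} or the $\cT$-admissible homotopy theory of \cite{BartschDing}). The conclusion is the same, but the mechanism is a Galerkin-type reduction combined with stability of degree under the admissible homotopies, not a single application of Brouwer degree on a fixed slice. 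A second minor remark: you invoke (A5) to obtain the upper bound $c_\mu\le\inf_{\cN_\mu}\J_\mu$, whereas the theorem as stated lists this inequality under (A1)--(A4) alone; in practice (A5) (or an equivalent maximality property on $\R^+u\oplus X^-$) is indeed what is used, so your reading is the operative one.
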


In the last part of this section we check assumptions (A3)-(A5). 
Observe that in view of the Hardy inequality
\begin{equation}\label{Hardyineq1}
\int_{\R^N}|\nabla u|^2\;dx\geq \frac{(N-2)^2}{4}\int_{\R^N}\frac{|u|^2}{|x|^2}\;dx
\end{equation}
for any $u\in X$. Hence 
\begin{equation}\label{Hardyineq}
\int_{\R^N}|\nabla u|^2-\mu\frac{|u|^2}{|x|^2}\;dx\geq \Big(1-\frac{4\mu}{(N-2)^2}\Big)\int_{\R^N}|\nabla u|^2\;dx,
\end{equation}
where $1-\frac{4\mu}{(N-2)^2}>0$.

The following lemma is standard and follows from (F1) - (F3).
\begin{Lem}\label{eqEstOfF} 
For any $\eps>0$ there is $C_\eps>0$ such that for any $u\in H^1(\R^N)$
\begin{equation}\label{EstimF}
\int_{\R^N}F(x,u)\; dx\leq \eps|u|_2^2+C_\eps|u|_p^p,
\end{equation}
where $|\cdot|_k$ stands for the norm in $L^k(\R^N)$ for any $k\geq 1$.
\end{Lem}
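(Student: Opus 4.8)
The plan is to split the integral according to whether the pointwise value $|u(x)|$ is small or large, estimating $F$ by a purely quadratic term near the origin and by a purely $p$-th power term away from it. The starting point is the elementary fact that, because $F(x,u)=\int_0^u f(x,t)\,dt$, any pointwise bound on $f$ integrates to a corresponding bound on $F$.

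First I would invoke (F3): since $f(x,u)=o(u)$ uniformly in $x$ as $|u|\to 0$, for the prescribed $\eps>0$ there exists $\delta=\delta(\eps)>0$ such that $|f(x,u)|\leq 2\eps|u|$ for all $x\in\R^N$ whenever $|u|\leq\delta$. Integrating in the $u$-variable gives $|F(x,u)|\leq \eps|u|^2$ for all $x$ and all $|u|\leq\delta$. Next I would use the subcritical growth bound (F2): integrating $|f(x,u)|\leq a(1+|u|^{p-1})$ yields $|F(x,u)|\leq a\big(|u|+\tfrac1p|u|^p\big)$ for all $u$ and $x$; and on the region $|u|\geq\delta$ one has $|u|\leq \delta^{1-p}|u|^p$, so that $|F(x,u)|\leq C_\eps|u|^p$ with $C_\eps:=a\big(\delta^{1-p}+\tfrac1p\big)$ there.

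Finally, writing $\R^N=\{x:|u(x)|\leq\delta\}\cup\{x:|u(x)|>\delta\}$ and combining the two pointwise estimates (using $F\leq|F|$) gives
\[
\int_{\R^N}F(x,u)\,dx\leq \eps\int_{\R^N}|u|^2\,dx+C_\eps\int_{\R^N}|u|^p\,dx=\eps|u|_2^2+C_\eps|u|_p^p,
\]
which is the claim; note that $u\in H^1(\R^N)$ already guarantees $u\in L^2\cap L^p$ by the Sobolev embedding, so both terms on the right are finite. There is no genuine obstacle in this argument — every step is a routine pointwise estimate — and the only point requiring a word of care is measurability of $x\mapsto F(x,u(x))$, which follows from (F1) together with continuity of $F(x,\cdot)$ and the fact that $u$ is measurable.
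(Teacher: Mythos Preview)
Your argument is correct and is precisely the standard one the paper has in mind: the authors do not spell out a proof here, they simply remark that the lemma ``is standard and follows from (F1)--(F3).'' Your splitting into $\{|u|\le\delta\}$ and $\{|u|>\delta\}$, with (F3) controlling the small-amplitude region and (F2) the large-amplitude region, is exactly that standard derivation.
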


\begin{Lem}
Let $0\le \mu <\frac{(N-2)^2}{4}\mu(V)^+$. Then for any $u\in X^+$
\begin{equation}
\|u\|_{\mu}:=\Big(\int_{\R^N}|\nabla u|^2+\Big(V(x)-\frac{\mu}{|x|^2}\Big)|u|^2\; dx\Big)^{1/2}
\end{equation}
satisfies inequalities
\begin{equation}\label{eqNormMuEst}
\|u\|^2\geq \|u\|^2_{\mu}\geq \frac{1}{2}\Big(\mu(V)^+ -\frac{4\mu}{(N-2)^2}\Big) \|u\|^2.
\end{equation}
Hence $\|\cdot\|_{\mu}$ is a norm in $X^+$ equivalent with $\|\cdot\|$.
\end{Lem}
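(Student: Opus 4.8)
The plan is to obtain both inequalities in \eqref{eqNormMuEst} directly from the definition of $\mu(V)^+$ together with the Hardy inequality \eqref{Hardyineq1}, and then to read off that $\|\cdot\|_\mu$ is an equivalent norm from the resulting two-sided estimate and the fact that $\|\cdot\|_\mu^2$ comes from a symmetric bilinear form on $X^+$. First note that $\int_{\R^N}|u|^2/|x|^2\,dx$ is finite for every $u\in X$ by \eqref{Hardyineq1}, and that for $u\in X^+$ the definition of the inner product gives $\|u\|^2=\int_{\R^N}|\nabla u|^2+V(x)|u|^2\,dx$, so that
\[
\|u\|_\mu^2=\|u\|^2-\mu\int_{\R^N}\frac{|u|^2}{|x|^2}\,dx .
\]
Since $\mu\ge 0$ and the last integral is nonnegative, the upper bound $\|u\|^2\ge\|u\|_\mu^2$ is immediate.

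For the lower bound I would first rewrite the defining property of $\mu(V)^+$, letting $M\nearrow\mu(V)^+$ in the defining inequality (which is preserved because $\int_{\R^N}|\nabla u|^2\,dx\ge 0$), as
\[
\|u\|^2=\int_{\R^N}|\nabla u|^2+V(x)|u|^2\,dx\ \ge\ \mu(V)^+\int_{\R^N}|\nabla u|^2\,dx\qquad\text{for }u\in X^+,
\]
so that $\int_{\R^N}|\nabla u|^2\,dx\le\big(\mu(V)^+\big)^{-1}\|u\|^2$. Feeding this into \eqref{Hardyineq1} yields $\int_{\R^N}|u|^2/|x|^2\,dx\le\frac{4}{(N-2)^2\mu(V)^+}\|u\|^2$, whence
\begin{equation*}
\|u\|_\mu^2=\|u\|^2-\mu\int_{\R^N}\frac{|u|^2}{|x|^2}\,dx\ \ge\ \Big(1-\frac{4\mu}{(N-2)^2\mu(V)^+}\Big)\|u\|^2=\frac{1}{\mu(V)^+}\Big(\mu(V)^+-\frac{4\mu}{(N-2)^2}\Big)\|u\|^2 .
\end{equation*}
By Lemma \ref{LemMu} we have $\mu(V)^+\in(0,1]$, hence $\big(\mu(V)^+\big)^{-1}\ge 1\ge\tfrac12$, and the bracket $\mu(V)^+-\frac{4\mu}{(N-2)^2}$ is nonnegative under the standing assumption $\mu<\frac{(N-2)^2}{4}\mu(V)^+$; multiplying through, the factor $\big(\mu(V)^+\big)^{-1}$ may be replaced by $\tfrac12$, which gives exactly the second inequality in \eqref{eqNormMuEst}.

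It remains to conclude that $\|\cdot\|_\mu$ is a norm on $X^+$ equivalent to $\|\cdot\|$. Homogeneity is clear, and since $\|u\|_\mu^2=B_\mu(u,u)$ with $B_\mu$ a symmetric bilinear form that is positive definite on $X^+$ (the lower bound just proved shows $B_\mu(u,u)=0\Rightarrow u=0$), the triangle inequality follows from the Cauchy--Schwarz inequality for $B_\mu$; in particular $\|u\|_\mu=0$ forces $\|u\|=0$, so $\|\cdot\|_\mu$ is indeed a norm, and the two-sided estimate \eqref{eqNormMuEst} is precisely its equivalence with $\|\cdot\|$. I do not expect a real obstacle: the only slightly delicate point is extracting the clean inequality $\|u\|^2\ge\mu(V)^+\int_{\R^N}|\nabla u|^2\,dx$ from the supremum in the definition of $\mu(V)^+$, which is a routine limiting argument, after which one simply has to track that the stated constant $\tfrac12$ is not sharp but is licensed by $\mu(V)^+\le 1$.
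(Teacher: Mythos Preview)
Your proof is correct. The ingredients are the same as in the paper (the definition of $\mu(V)^+$, the Hardy inequality \eqref{Hardyineq1}, and the fact $\mu(V)^+\le 1$ from Lemma \ref{LemMu}), but the assembly differs slightly. You write $\|u\|_\mu^2=\|u\|^2-\mu\int_{\R^N}|u|^2/|x|^2\,dx$ and bound the Hardy perturbation directly in terms of $\|u\|^2$ via $\int_{\R^N}|u|^2/|x|^2\,dx\le \frac{4}{(N-2)^2\mu(V)^+}\|u\|^2$, arriving in one stroke at $\|u\|_\mu^2\ge \frac{1}{\mu(V)^+}\big(\mu(V)^+-\frac{4\mu}{(N-2)^2}\big)\|u\|^2$. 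The paper instead derives two separate lower bounds on $\|u\|_\mu^2$, namely $\|u\|_\mu^2\ge\big(\mu(V)^+-\frac{4\mu}{(N-2)^2}\big)\int_{\R^N}|\nabla u|^2\,dx$ and $\|u\|_\mu^2\ge\int_{\R^N}V(x)|u|^2\,dx$, and then combines them to recover the factor $\frac12$. Your route is a bit more direct and gives the sharper intermediate constant $1/\mu(V)^+\ge 1$ before weakening to $\tfrac12$; the paper's route has the mild advantage of isolating the estimate $\|u\|_\mu^2\ge c\int_{\R^N}|\nabla u|^2\,dx$, which is of independent use. Either way the conclusion and the constant in \eqref{eqNormMuEst} are identical.
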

\begin{proof}
Observe that by (\ref{Hardyineq1})
\begin{eqnarray*}
\int_{\R^N}|\nabla u|^2+\Big(V(x)-\frac{\mu}{|x|^2}\Big)|u|^2\; dx&\geq& \int_{\R^N}\mu(V)^+|\nabla u|^2-\frac{\mu}{|x|^2}|u|^2\;dx\\
&\geq& \Big(\mu(V)^+-\frac{4\mu}{(N-2)^2}\Big)\int_{\R^N}|\nabla u|^2\;dx
\end{eqnarray*}
for any $u\in X^+$. On the other hand by (\ref{Hardyineq})
\begin{eqnarray*}
\int_{\R^N}|\nabla u|^2+\Big(V(x)-\frac{\mu}{|x|^2}\Big)|u|^2\; dx
\geq \int_{\R^N} V(x)|u|^2\; dx
\end{eqnarray*}
and by Lemma \ref{LemMu} we get (\ref{eqNormMuEst}).
\end{proof}

Now we show that $\J_{\mu}$ satisfies (A3) and (A4).
\begin{Lem}\label{LemGeometry}
Let $0\leq \mu <\frac{(N-2)^2}{4}\mu(V)^+$. For any $u_0\in X\setminus X^-$ there are $R>r>0$ such that
\begin{equation}\label{eqLinkingGeo}
\inf_{u\in X^+:\;\|u\|=r}\J_{\mu}(u)>\J_{\mu}(0)=0\ge\sup_{\partial M(u_0)}\J_{\mu}.
\end{equation}
\end{Lem}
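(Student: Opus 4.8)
The plan is to establish the two halves of \eqref{eqLinkingGeo}, i.e. conditions (A3) and (A4), separately; (A3) is routine coercivity near the origin on $X^+$, while (A4) is the substantive part. For (A3), write $\J_\mu(u)=\frac12\|u\|_\mu^2-\int_{\R^N}F(x,u)\,dx$ for $u\in X^+$. Since $\mu<\frac{(N-2)^2}{4}\mu(V)^+$, the constant $c:=\frac12\big(\mu(V)^+-\frac{4\mu}{(N-2)^2}\big)$ is positive, so \eqref{eqNormMuEst} gives $\|u\|_\mu^2\ge c\|u\|^2$. Combining this with \eqref{EstimF} and the Sobolev embeddings $X\hookrightarrow L^2(\R^N)$ and $X\hookrightarrow L^p(\R^N)$ (valid because $2<p<2^*$), I get, for every $\eps>0$, constants $C_1,C_2>0$ and $C_\eps>0$ with
$$\J_\mu(u)\ge \tfrac c2\|u\|^2-\eps C_1\|u\|^2-C_\eps C_2\|u\|^p .$$
Choosing $\eps$ with $\eps C_1\le c/4$ and then $r>0$ with $C_\eps C_2 r^{p-2}\le c/8$ yields $\J_\mu(u)\ge \frac c8 r^2=:m>0$ for all $u\in X^+$ with $\|u\|=r$; note this $r$ does not depend on $u_0$.

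For (A4), fix $u_0\in X\setminus X^-$, so $u_0^+\neq0$, and suppose for contradiction that no admissible $R$ exists. For $v\in X^-$ one has $v^+=0$ and hence
$$\J_\mu(v)=-\tfrac12\|v\|^2-\tfrac\mu2\int_{\R^N}\frac{|v|^2}{|x|^2}\,dx-\int_{\R^N}F(x,v)\,dx\le0$$
because $\mu\ge0$ and $F\ge0$ by \eqref{ARfor2}, so the face $\{t=0\}$ of $M(u_0)$ never violates the desired inequality. Therefore for each $n$ there is a point on the spherical part of $\partial M(u_0)$ of radius $n$, i.e. $w_n=t_nu_0+v_n$ with $t_n>0$, $v_n\in X^-$, $\|w_n\|=n$ and $\J_\mu(w_n)>0$. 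Since $w_n^+=t_nu_0^+$, $w_n^-=t_nu_0^-+v_n$, positivity of $\J_\mu(w_n)$ together with the nonpositivity of the Hardy term and of $-\int F$ forces $\|w_n^-\|^2\le\|w_n^+\|^2$, hence $\|w_n\|^2\le 2\|w_n^+\|^2=2t_n^2\|u_0^+\|^2$; in particular $t_n\to\infty$ and $\|w_n^+\|^2/\|w_n\|^2\ge\frac12$.

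Next set $s_n:=w_n/\|w_n\|=\tau_nu_0+\bar v_n$ with $\tau_n:=t_n/\|w_n\|$ and $\bar v_n:=v_n/\|w_n\|\in X^-$. The component $s_n^+=\tau_nu_0^+$ lies in the one-dimensional space $\R u_0^+$ and $\|s_n^+\|\ge1/\sqrt2$, so along a subsequence $\tau_n\to\tau\in(0,\infty)$, $s_n^+\to\tau u_0^+$ strongly in $X$, $\bar v_n\weakto\bar v$ for some $\bar v\in X^-$, and $s_n\weakto s:=\tau u_0+\bar v$; thus $\|s^+\|=\tau\|u_0^+\|\ge1/\sqrt2$, so $s\neq0$, $|\{s\neq0\}|>0$, and after a further subsequence $s_n\to s$ a.e. On $\{s\neq0\}$ we have $|w_n|=\|w_n\|\,|s_n|\to\infty$ a.e., so (F4) gives
$$\frac{F(x,w_n)}{\|w_n\|^2}=\frac{F(x,w_n)}{|w_n|^2}\,|s_n|^2\to\infty\quad\text{a.e. on }\{s\neq0\},$$
and since the integrand is $\ge0$, Fatou's lemma yields $\int_{\R^N}F(x,w_n)/\|w_n\|^2\,dx\to\infty$. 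On the other hand, dividing $0\le\J_\mu(w_n)$ by $\|w_n\|^2$ and discarding the nonpositive Hardy term,
$$\int_{\R^N}\frac{F(x,w_n)}{\|w_n\|^2}\,dx\le\tfrac12\big(\|s_n^+\|^2-\|s_n^-\|^2\big)\le\tfrac12 ,$$
a contradiction. Hence $R=R(u_0)>r$ exists and \eqref{eqLinkingGeo} holds.

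I expect the main obstacle to be (A4). The points that need care are: first, describing $\partial M(u_0)$ correctly and isolating the face $t=0$, which is harmless precisely because $\mu\ge0$ and $F\ge0$; second, verifying that the normalized limit $s$ is nontrivial — this is forced here by the one-dimensionality of $\R u_0^+$, which upgrades weak convergence of $s_n^+$ to strong convergence and gives $\|s^+\|\ge1/\sqrt2$; and third, extracting the contradiction from the superquadratic growth (F4) via Fatou. The sign hypothesis $\mu\ge0$ is used repeatedly so that the singular term $-\frac\mu2\int|u|^2/|x|^2$ only works in our favour.
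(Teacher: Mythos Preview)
Your proof is correct and follows essentially the same strategy as the paper. For (A3) the arguments coincide almost verbatim; for (A4) the paper simply observes $\J_\mu\le\J_0$ (since $\mu\ge0$) and then cites \cite{Liu,SzulkinWeth} for the fact that $\J_0(tu_0+v)\to-\infty$ as $\|tu_0+v\|\to\infty$, whereas you carry out that divergence argument directly for $\J_\mu$ via normalization, the one-dimensionality of $\R u_0^+$, (F4), and Fatou --- which is precisely the argument those references contain, so your version is a self-contained rendering of the same idea rather than a different route.
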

\begin{proof}
Observe that by (\ref{eqNormMuEst}) and Lemma \ref{eqEstOfF} we get
\begin{eqnarray*}
\J_{\mu}(u^+)&\geq&
\frac{1}{4}\Big(\mu(V)^+-\frac{4\mu}{(N-2)^2}\Big)\|u^+\|^2 -\int_{\R^N}F(x,u^+)\;dx\\
&\geq& \frac{1}{4}\Big(\mu(V)^+-\frac{4\mu}{(N-2)^2}\Big)\|u^+\|^2-\eps|u^+|_2^2-C_\eps|u^+|_p^p
\end{eqnarray*}
for any $\eps>0$. Taking $\eps>0$ small enough, it is easy to see that there exists $r>0$ small enough such that
$$\inf_{u\in X^+:\;\|u\|=r}\J_{\mu}(u)>\J_{\mu}(0)=0.$$
Let $u_0\in X\setminus X^-$ and $u=tu_0+u^-$, $u^-\in X^-$, $t\geq 0$. Since
\begin{eqnarray*}
\J_{\mu}(u)&\leq&\J_{0}(u)=\frac{1}{2}\int_{\R^N}|\nabla u|^2+V(x)|u|^2\;dx-\int_{\R^N}F(x,u)\;dx,
\end{eqnarray*}
then similarly as in \cite{Liu,SzulkinWeth}
we obtain that
$\J_{\mu}(u)\to -\infty$ if $\|u\|\to \infty$.
Moreover
$\J_{\mu}(u^-)\leq 0$ and we find sufficiently large $R>r$ such that
$$\sup_{\partial M(u_0)}\J_{\mu}\le 0.$$
\end{proof}

\begin{Lem}\label{Lem_IneqForJ}
If $\mu> -\frac{4}{(N-2)^2}\mu(V)^-$, $u\in X\setminus X^-$, $v\in X^-$ and $t\geq 0$, then 
\begin{equation}\label{ineqForJ}
\J_\mu(u)\geq \J_{\mu}(tu+v)-\J_\mu'(u)\Big(\frac{t^2-1}{2}u+tv\Big).                                                                
\end{equation}
In particular (A5) holds.
\end{Lem}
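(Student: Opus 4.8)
The plan is to establish the pointwise inequality by a direct computation, splitting the functional into its quadratic part and the part involving $F$, and handling each separately. Write $w:=tu+v$. Expanding the quadratic form and using that $X^+\perp X^-$ with respect to both $\langle\cdot,\cdot\rangle$ and $B_\mu(\cdot,\cdot)$ on the relevant subspaces (and noting $B_\mu$ restricted to $X^-$ equals $B_0$ there since we only shift by the Hardy term, which combined with the hypothesis $\mu>-\frac{4}{(N-2)^2}\mu(V)^-$ keeps $B_\mu$ negative definite on $X^-$), one checks that the ``quadratic minus linear'' combination
$$\tfrac12 B_\mu(w,w)-\tfrac12 B_\mu(u,u)+B_\mu\!\left(u,\tfrac{t^2-1}{2}u+tv\right)$$
simplifies to $-\tfrac12 B_\mu(v,v)\ge 0$, using the negative definiteness of $B_\mu$ on $X^-$. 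This is the algebraic heart of the Nehari--Pankov estimate and is where the sign condition on $\mu$ enters.

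Next I would treat the nonlinear part. What remains to be shown is
$$\int_{\R^N} F(x,tu+v)-F(x,u)+f(x,u)\Big(\tfrac{t^2-1}{2}u+tv\Big)\,dx\ \le\ 0,$$
and in fact I would prove the stronger pointwise inequality, for a.e.\ $x$ and all $s\in\R$ (here $s$ plays the role of $v(x)$) and $t\ge0$:
$$F(x,ts+\cdot)\ \text{— more precisely } F(x,tu+v)\le F(x,u)+f(x,u)\Big(\tfrac{t^2-1}{2}u+tv\Big).$$
Fixing $x$ and writing $\alpha:=u(x)$, $\beta:=v(x)$, this is the one-variable statement $F(x,t\alpha+\beta)-F(x,\alpha)\le f(x,\alpha)\big(\tfrac{t^2-1}{2}\alpha+t\beta\big)$. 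The standard way to see this, originating in \cite{Pankov,SzulkinWeth} and adapted to the weak monotonicity (F5) as in \cite{Liu,MederskiNehari2014}, is to consider $\phi(t):=F(x,\alpha)+f(x,\alpha)\big(\tfrac{t^2-1}{2}\alpha+t\beta\big)-F(x,t\alpha+\beta)$ and show $\phi(1)\ge 0$, $\phi'(t)=\big(t\alpha+\beta\big)\big(f(x,\alpha)-f(x,t\alpha+\beta)\frac{\cdot}{\cdot}\big)$ — concretely, $\phi'(t)=(\alpha t+\beta)\big(f(x,\alpha)- \tfrac{\alpha}{\,}\cdots\big)$; the clean form is $\phi'(t)=(t\alpha+\beta)\big(f(x,\alpha)-\tfrac{f(x,t\alpha+\beta)}{t\alpha+\beta}\,(t\alpha+\beta)\big)$ which one rearranges, via (F5) (monotonicity of $r\mapsto f(x,r)/|r|$), into a sign that forces $\phi$ to have a global minimum at $t=1$ on $[0,\infty)$, whence $\phi\ge\phi(1)=0$. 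A little care is needed at $t\alpha+\beta=0$ and at $\alpha=0$, handled by continuity and by the convention $f(x,0)=0$ from (F3).

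Combining the two parts yields \eqref{ineqForJ}. Finally, (A5) is the special case $v\in X^-$ of the conclusion read as $\J_\mu(u)\ge\J_\mu(tu+v)$ once we know $\J'_\mu(u)\big(\tfrac{t^2-1}{2}u+tv\big)=0$; but when $u\in\cN_\mu$ this linear term vanishes by the very definition of the Nehari--Pankov manifold (the conditions $\J'_\mu(u)(u)=0$ and $\J'_\mu(u)(v)=0$ for all $v\in X^-$), so \eqref{ineqForJ} reduces to $\J_\mu(u)\ge\J_\mu(tu+v)$. The main obstacle is the one-variable pointwise inequality in the second paragraph: under the mere non-strict monotonicity (F5) one cannot use convexity or strict-inequality arguments, so the sign analysis of $\phi'$ must be done carefully, distinguishing the regions where $t\alpha+\beta$ and $\alpha$ have the same or opposite signs.
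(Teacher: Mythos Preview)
Your overall framework matches the paper's: split $\J_\mu(tu+v)-\J_\mu(u)-\J_\mu'(u)\big(\tfrac{t^2-1}{2}u+tv\big)$ into the quadratic contribution $\tfrac12 B_\mu(v,v)$ and the integral of the pointwise function $\vp(t,x)=f(x,u)\big(\tfrac{t^2-1}{2}u+tv\big)+F(x,u)-F(x,tu+v)$, then show each is nonpositive. Two corrections on the quadratic side: $X^+$ and $X^-$ are \emph{not} $B_\mu$-orthogonal (only $B_0$-orthogonal), but this is irrelevant since the reduction to $\tfrac12 B_\mu(v,v)$ is pure bilinear-form algebra needing no orthogonality; and $B_\mu$ does \emph{not} coincide with $B_0$ on $X^-$, rather $B_\mu(v,v)=-\|v\|^2-\mu\int_{\R^N}|v|^2/|x|^2\,dx$, and it is exactly the hypothesis $\mu>-\tfrac{4}{(N-2)^2}\mu(V)^-$ (via Hardy's inequality and the definition of $\mu(V)^-$) that forces this to be nonpositive. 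These are cosmetic issues.

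The genuine gap is in your pointwise argument for the nonlinear part. With $\alpha=u(x)$, $\beta=v(x)$ and $\phi(t)=-\vp(t,x)$, you assert $\phi(1)=0$ and that $t=1$ is the global minimum of $\phi$ on $[0,\infty)$. Neither holds in general: $\phi(1)=F(x,\alpha+\beta)-F(x,\alpha)-f(x,\alpha)\beta$ is typically nonzero when $\beta\ne 0$, and $\phi'(1)=f(x,\alpha)(\alpha+\beta)-f(x,\alpha+\beta)\alpha$ need not vanish, so $t=1$ is not a critical point. The paper's argument instead locates the \emph{maximum} of $\vp$: one has $\vp(0,x)\le 0$ by \eqref{ARfor2} and $\vp(t,x)\to-\infty$ as $t\to\infty$ by (F4), so the maximum is attained at some $t_0>0$ where the stationarity condition reads $f(x,u)(t_0 u+v)=f(x,t_0 u+v)\,u$. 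From this identity together with (F5) (on any interval where $s\mapsto f(x,s)/|s|$ is constant one integrates $F$ explicitly) one computes $\vp(t_0,x)\le 0$. Your sketch confuses the special case $\beta=0$, where indeed $\phi(1)=0$ and $\phi'(1)=0$, with the general case; the step that must be filled in is the analysis at the unknown maximizer $t_0$, which is precisely where (F5) is used in a nontrivial way.
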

\begin{proof}
Observe that 
\begin{eqnarray*}
\J_\mu(tu+v)-\J_\mu(u)-\J_\mu'(u)\Big(\frac{t^2-1}{2}u+tv\Big) = -\frac{1}{2}\|v\|^2-\frac{1}{2}\int_{\R^N}\frac{\mu}{|x|^2}|v|^2\,dx+\int_{\R^N}\vp(t,x)\, dx
\\ 
\leq-\frac{1}{2}\|v\|^2+\frac{1}{2}\max\Big\{0,-\mu\frac{(N-2)^2}{4}\Big\}\int_{\R^N}|\nabla v|^2\,dx+\int_{\R^N}\vp(t,x)\, dx,
\end{eqnarray*}
where
$$\vp(t,x):=f(x,u)\Big(\frac{t^2-1}{2}u+tv\Big)
  + F(u) - F(tu+v).$$
Suppose that $u(x)\neq 0$. Similarly as in \cite{SzulkinWeth,MederskiNehari2014} we show that $\vp(t,x)\leq 0$.  
Indeed, in view of \eqref{ARfor2} we get
$\vp(0,x)\leq 0$. By (F4), we obtain $\vp(t,x)\to-\infty$ as $t\to\infty$. Let $t_0\geq 0$ be such that
$$\vp(t_0,x)=\max_{t\geq 0}\vp(t,x).$$
We may assume that $t_0>0$ and thus $\partial_t\vp(t_0,x)=0$. Thus
$$f(x,u)(t_0u+v)=f(x,t_0u+v)u.$$
If $t_0u+v=0$ or $t_0u+v\neq 0$, then by \eqref{ARfor2} we obtain
$\vp(t_0,x)\leq 0$. Suppose that $u< t_0u+v$. Then, by (F5) a function 
$$(0,+\infty)\ni s\mapsto \frac{f(x,s)}{s}\in \R$$
is constant on $(u, t_0u+v)$ and
$$F(x,u)-F(x,t_0u+v)= f(x,u)\frac{u^2-(t_0u+v)^2}{2u}.$$
Thus
\begin{eqnarray*}
\vp(t_0,x)&= &f(x,u)\Big(\frac{t_0^2-1}{2}u+t_0v\Big)
  + f(x,u)\frac{u^2-(t_0u+v)^2}{2u}\\
&=&-\frac{f(x,u)uv^2}{2u^2}\leq 0.
\end{eqnarray*}
Similarly we check the case $u>t_0u+v$.
Therefore $\vp(t,x)\leq 0$ for any $t\geq 0$ and 
\eqref{ineqForJ} holds.
\end{proof}

\section{Proof of Theorem \ref{ThMain1}}\label{sec:ProofTh1}

We need the following lemma.
\begin{Lem}\label{LemIntegralConvergence}
If $|x_n|\to\infty$, then for any $u\in H^1(\R^N)$,
$$\int_{\R^N}\frac{1}{|x|^2}|u(\cdot-x_n)|^2\;dx\to 0\hbox{ as } 
n\to\infty.$$
\end{Lem}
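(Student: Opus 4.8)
The plan is to argue by density of $C_c^\infty(\R^N)$ in $H^1(\R^N)$, using the Hardy inequality \eqref{Hardyineq1} to absorb the approximation error uniformly in $n$. Throughout I use the translation-invariant substitution $y=x-x_n$, which turns the quantity into $\int_{\R^N}\frac{|u(y)|^2}{|y+x_n|^2}\,dy$.

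First I would handle the case $u\in C_c^\infty(\R^N)$. Choose $\rho>0$ with $\supp u\subset B_\rho(0)$. Then
\[
\int_{\R^N}\frac{1}{|x|^2}|u(x-x_n)|^2\,dx=\int_{B_\rho(0)}\frac{|u(y)|^2}{|y+x_n|^2}\,dy,
\]
and for $|x_n|\geq 2\rho$ and $|y|\leq\rho$ we have $|y+x_n|\geq |x_n|-\rho\geq |x_n|/2$, so the right-hand side is at most $\frac{4}{|x_n|^2}|u|_2^2$, which tends to $0$ as $n\to\infty$.

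Next, for general $u\in H^1(\R^N)$ and any $\eps>0$, I would pick $\phi\in C_c^\infty(\R^N)$ with $|\nabla(u-\phi)|_2<\eps$, which is possible since $C_c^\infty(\R^N)$ is dense in $H^1(\R^N)$. Using $|a+b|^2\leq 2|a|^2+2|b|^2$ together with the Hardy inequality \eqref{Hardyineq1} applied to the translated function $x\mapsto (u-\phi)(x-x_n)\in\D^{1,2}(\R^N)$,
\[
\int_{\R^N}\frac{|u(x-x_n)|^2}{|x|^2}\,dx
\leq 2\int_{\R^N}\frac{|\phi(x-x_n)|^2}{|x|^2}\,dx
+2\cdot\frac{4}{(N-2)^2}|\nabla(u-\phi)|_2^2
\leq 2\int_{\R^N}\frac{|\phi(x-x_n)|^2}{|x|^2}\,dx+\frac{8}{(N-2)^2}\eps^2.
\]
By the compactly supported case the first term on the right tends to $0$, hence $\ls_{n\to\infty}\int_{\R^N}|x|^{-2}|u(x-x_n)|^2\,dx\leq \frac{8}{(N-2)^2}\eps^2$, and letting $\eps\to 0$ gives the claim.

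I do not expect a serious obstacle here; the only point requiring some care is that the integrand carries a singularity at the origin that is not integrable for arbitrary $L^2$ functions, and this is exactly what the Hardy inequality controls — it is also what makes the error term in the density argument bounded uniformly in $n$, rather than merely finite for each fixed $n$.
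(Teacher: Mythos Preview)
Your proof is correct and follows essentially the same approach as the paper: reduce to the compactly supported case by density of $C_c^\infty(\R^N)$ in $H^1(\R^N)$, and control the approximation error uniformly in $n$ via the Hardy inequality \eqref{Hardyineq1}. Your write-up is in fact a bit more explicit than the paper's, which carries out the compactly supported estimate along a diagonal subsequence and then simply invokes Hardy to ``conclude''; your $\limsup$ argument with $|a+b|^2\le 2|a|^2+2|b|^2$ makes that final step transparent.
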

\begin{proof}
Let $\vp_m\in C^{\infty}_0(\R^N)$ and $\vp_m\to u$ in $H^1(\R^N)$ as $m\to\infty$. Take $R_m>0$ such that $\supp(\vp_m)\subset B(0,R_m)$. For any $m$ we find $n=n(m)$ such that $|x_n|-R_m\geq m$ and $(n(m))$ is an increasing sequence. 
Then we get 
\begin{eqnarray*}
\int_{\R^N}\frac{1}{|x|^2}|\vp_m(\cdot-x_n)|^2\,dx&=&
\int_{\R^N}\frac{1}{|x+x_n|^2}|\vp_m|^2\,dx=
\int_{B(0,R_m)}\frac{1}{|x+x_n|^2}|\vp_m|^2\,dx\\
&\leq& \frac{1}{(|x_n|-R_m)^2}\int_{B(0,R_m)}|\vp_m|^2\,dx
\leq \frac{1}{m^2} |\vp_m|_2^2\\
&\to& 0.
\end{eqnarray*}
In view of the Hardy inequality \eqref{Hardyineq1} we conclude.
\end{proof}

\begin{Lem}\label{Lem_PS}
If $(u_n)\subset X$ and $(\mu_n)\subset [0,+\infty)$ are such that $\mu_n\leq \mu<\frac{(N-2)^2}{4}\mu(V)^+$, $(1+\|u_n\|)\J'_{\mu_n}(u_n)\to 0$ and $\J_{\mu_n}(u_n)$ is bounded from above, then $(u_n)$ is bounded. In particular any Cerami sequence of $\J_{\mu}$ is bounded for $0\leq \mu<\frac{(N-2)^2}{4}\mu(V)^+$.
\end{Lem}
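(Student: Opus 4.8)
The plan is to prove boundedness of $(u_n)$ by contradiction: suppose $\|u_n\|\to\infty$ and set $w_n:=u_n/\|u_n\|$, so $\|w_n\|=1$. Passing to a subsequence we may assume $w_n\weakto w$ in $X$, $w_n\to w$ in $L^2_{loc}$ and a.e. The first step is to extract quantitative information from the hypotheses. From $(1+\|u_n\|)\J'_{\mu_n}(u_n)\to 0$ we get in particular $\J'_{\mu_n}(u_n)(u_n^+)\to 0$ and $\J'_{\mu_n}(u_n)(u_n^-)\to 0$, i.e.
\begin{equation*}
\|u_n^+\|_{\mu_n}^2=\int_{\R^N}f(x,u_n)u_n^+\,dx+o(\|u_n\|),\qquad -\|u_n^-\|^2-\int_{\R^N}\frac{\mu_n}{|x|^2}|u_n^-|^2\,dx=\int_{\R^N}f(x,u_n)u_n^-\,dx+o(\|u_n\|).
\end{equation*}
Also, since $\J_{\mu_n}(u_n)$ is bounded above and $f(x,u)u\ge 2F(x,u)\ge 0$ by \eqref{ARfor2}, combining $\J_{\mu_n}(u_n)-\tfrac12\J'_{\mu_n}(u_n)(u_n)$ shows $\int_{\R^N}\big(\tfrac12 f(x,u_n)u_n-F(x,u_n)\big)\,dx$ is bounded; but I expect instead to use that $\J_{\mu_n}(u_n)\le C$ gives directly $\tfrac12(\|u_n^+\|_{\mu_n}^2-\|u_n^-\|_{\mu_n}^2)-\int F(x,u_n)\,dx\le C$ where $\|u^-\|_{\mu_n}^2:=\|u^-\|^2+\int \mu_n|x|^{-2}|u^-|^2$.

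The second step handles the two cases according to whether $w$ vanishes in a suitable sense. \textbf{Vanishing case.} Suppose $\sup_{y\in\Z^N}\int_{B(y,1)}|w_n|^2\,dx\to 0$. Then by the Lions lemma $w_n\to 0$ in $L^s(\R^N)$ for all $s\in(2,2^*)$. I would test with $z_n:=s\,u_n^+ - s\,u_n^-\in X$ for a large constant $s>0$ (this is the standard trick): using the geometry, $\J_{\mu_n}(z_n)\ge \tfrac{c}{2}s^2\|w_n^+\|^2\cdot\|u_n\|^2\cdot$(something) blows up, but on the other hand by (A5)-type monotonicity (Lemma \ref{Lem_IneqForJ}) $\J_{\mu_n}(sw_n^+ - sw_n^-)$ can be controlled. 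More precisely, $\J_{\mu_n}(su_n^+ + v) $ ... — actually the cleaner route: for fixed $R>0$ let $v_n:=R w_n\in X$; by Lemma \ref{Lem_IneqForJ} applied with $u=u_n\in\cN$-like point... Since $u_n$ need not be in $\cN_{\mu_n}$, I instead use that $\J_{\mu_n}(u_n)\ge\J_{\mu_n}(su_n^+/\|u_n\| \cdots)$ only along the ray, so better to argue: from the $\J'$ estimates, $\|u_n^+\|_{\mu_n}^2 - \|u_n^-\|_{\mu_n}^2 - \int f(x,u_n)(u_n^+-u_n^-)\,dx = o(\|u_n\|)$, and $f(x,u_n)(u_n^+-u_n^-)\le f(x,u_n)|u_n|$... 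Use (F2): $|f(x,u_n)|\le a(1+|u_n|^{p-1})$ so $\int f(x,u_n)u_n^\pm\,dx\le a|u_n|_1 + a|u_n|_p^{p-1}|u_n^\pm|_p$; dividing by $\|u_n\|^2$ and using $|w_n|_p\to 0$ one gets $\|w_n^+\|^2-\|w_n^-\|^2\to 0$, hence (together with $\|w_n^+\|^2+\|w_n^-\|^2=1$ up to the equivalence of $\|\cdot\|_{\mu_n}$ and $\|\cdot\|$ on $X^+$ from \eqref{eqNormMuEst}, and $\|w_n^-\|^2\le\|w_n^-\|_{\mu_n}^2\le(1+\tfrac{4|\mu_n|}{(N-2)^2})\|w_n^-\|^2$) we obtain $\|w_n^+\|^2\to 1/2$; but then the bound on $\J_{\mu_n}(u_n)/\|u_n\|^2\to 0$ forces $\int \tfrac{F(x,u_n)}{\|u_n\|^2}\,dx$ to stay bounded while superlinearity (F4) via Fatou on the set $\{w\ne 0\}$... — so vanishing actually must be excluded using (F4), see below. \textbf{Nonvanishing case.} If the $L^2$-mass does not vanish, choose $(y_n)\subset\Z^N$ with $\int_{B(y_n,1)}|w_n|^2\ge\delta>0$; by $\Z^N$-periodicity of $V$ and $f$, the translated $\tilde w_n:=w_n(\cdot+y_n)$ satisfy $\|\tilde w_n\|=1$, $\tilde w_n\weakto \tilde w\ne 0$. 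On $\{\tilde w\ne 0\}$ we have $|u_n(\cdot+y_n)|=\|u_n\|\,|\tilde w_n|\to\infty$ a.e., so by (F4) and Fatou, $\int_{\R^N}\frac{F(x,u_n(\cdot+y_n))}{\|u_n\|^2}\,dx\to\infty$. But dividing $\J_{\mu_n}(u_n)\le C$ by $\|u_n\|^2$ and using translation invariance of everything except the Hardy term — and the Hardy term has a sign: for $\mu_n\ge 0$, $-\int\frac{\mu_n}{|x|^2}|u_n|^2\le 0$ helps, so $\frac{C}{\|u_n\|^2}\ge \frac12(\|\tilde w_n^+\|^2-\|\tilde w_n^-\|_{\mu_n}^2) - \int\frac{F}{\|u_n\|^2}\ge -\frac12 - \int\frac{F}{\|u_n\|^2}\to-\infty$, a contradiction. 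This also kills the vanishing case once we observe $\tilde w=w$ can be taken nonzero; if truly $w_n\to 0$ in all $L^s$, $s\in(2,2^*)$, then I run the $z_n = s(u_n^+-u_n^-)$ comparison to get the contradiction with (A3): $\J_{\mu_n}\big(\text{the ray through }u_n^+\big)$ evaluated suitably is $\ge$ something positive and unbounded, contradicting $\J_{\mu_n}(u_n)\le C$ via monotonicity along $M(u_n^+)$.

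Let me streamline the intended argument so it is clean. Suppose $\|u_n\|\to\infty$, put $w_n=u_n/\|u_n\|$. Case 1: $\delta:=\limsup_n\sup_{y\in\Z^N}\int_{B(y,1)}|w_n^+|^2\,dx=0$. Since $w_n^+\in X^+$ and $\|w_n^+\|$ is bounded, Lions' lemma gives $w_n^+\to 0$ in $L^s$, $s\in(2,2^*)$. For $t\ge 0$ consider $\J_{\mu_n}(tu_n^+)$: by \eqref{eqNormMuEst} and Lemma \ref{eqEstOfF}, $\J_{\mu_n}(tw_n^+\cdot\|u_n\|)\ge \tfrac14(\mu(V)^+-\tfrac{4\mu}{(N-2)^2})t^2\|u_n\|^2\|w_n^+\|^2 - \varepsilon t^2\|u_n\|^2|w_n^+|_2^2 - C_\varepsilon t^p\|u_n\|^p|w_n^+|_p^p$. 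Here $\|w_n^+\|^2$ is bounded below away from $0$ (else $\|w_n^-\|\to1$ contradicts the $\J'$ estimate combined with $\J_{\mu_n}(u_n)\le C$ — indeed $\J_{\mu_n}(u_n)\le \tfrac12\|u_n^+\|_{\mu_n}^2 \le \tfrac12\|u_n\|^2\|w_n^+\|^2$ using $F\ge0$, so if $\|w_n^+\|\to0$ then... that gives $\J_{\mu_n}(u_n)/\|u_n\|^2\to 0$, which is consistent, so I must argue differently: use $\J'_{\mu_n}(u_n)(u_n^-)\to0$ to bound $\|u_n^-\|$). This case-1 bookkeeping is the main obstacle: I must rule out that all the mass is lost. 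I would therefore argue that in Case 1, choosing $t_n$ with $t_n\|u_n\|\|w_n^+\|$ of order $1$ makes $\J_{\mu_n}(t_nu_n^+)\ge m>0$ by (A3)-type estimate while Lemma \ref{Lem_IneqForJ} (A5 with $v=0$) gives $\J_{\mu_n}(u_n)\ge \J_{\mu_n}(t_nu_n+ v)$ only when $u_n$ is stationary — which it is not exactly, but the error is $\J'_{\mu_n}(u_n)(\cdot)$ which is $o(\|u_n\|)$ against vectors of norm $O(\|u_n\|)$, hence $o(\|u_n\|^2)$, still too large. The honest fix, and the one I expect the authors use, is: in Case 1 test with $s w_n$ for fixed large $s$ and exploit $\J_{\mu_n}(u_n)\ge \J_{\mu_n}(s w_n) + o(1)$ via inequality \eqref{ineqForJ} rearranged as $\J_{\mu_n}(su_n/\|u_n\|)\le\J_{\mu_n}(u_n) + \J'_{\mu_n}(u_n)(\tfrac{s^2/\|u_n\|^2-1}{2}u_n + \tfrac{s}{\|u_n\|}u_n^-)\le C + \tfrac{\|(1+\|u_n\|)\J'_{\mu_n}(u_n)\|}{1+\|u_n\|}\cdot O(\|u_n\|) = C + o(1)$. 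So $\J_{\mu_n}(sw_n)\le C+o(1)$ for every fixed $s$. But $\J_{\mu_n}(sw_n)=\tfrac{s^2}{2}(\|w_n^+\|_{\mu_n}^2-\|w_n^-\|_{\mu_n}^2)-\int F(x,sw_n)\,dx\ge \tfrac{s^2}{2}\big(c_0\|w_n^+\|^2-\|w_n^-\|^2(1+\tfrac{4|\mu_n|}{(N-2)^2})\big) - \varepsilon s^2|w_n|_2^2 - C_\varepsilon s^p|w_n|_p^p$; in Case 1, $|w_n|_p\to 0$ (extend from $w_n^+$ to $w_n$ since vanishing of $w_n$, not just $w_n^+$, is what we assume — I will assume vanishing of $w_n$), so the last term $\to0$, and using the $\J'$ estimates to show $c_0\|w_n^+\|^2-\|w_n^-\|^2\ge c_1>0$ eventually, we get $\liminf_n\J_{\mu_n}(sw_n)\ge\tfrac{c_1 s^2}{2}-\varepsilon s^2|w_n|_2^2$; picking $\varepsilon$ small and then $s$ large contradicts $\J_{\mu_n}(sw_n)\le C+o(1)$. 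Case 2: non-vanishing of $w_n$; translate by $y_n\in\Z^N$ and use (F4)+Fatou exactly as sketched above to contradict $\J_{\mu_n}(u_n)\le C$ (the Hardy term being harmless since $\mu_n\ge0$ makes it $\le0$ and translation-invariance of $\|\cdot\|,\|\cdot\|_{X^\pm}, F$ under $\Z^N$). This completes the contradiction; hence $(u_n)$ is bounded, and the final sentence about Cerami sequences of $\J_\mu$ follows by taking $\mu_n\equiv\mu$.
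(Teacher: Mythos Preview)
Your overall architecture---assume $\|u_n\|\to\infty$, normalize $w_n=u_n/\|u_n\|$, split into a vanishing and a nonvanishing case via Lions' lemma, and in the nonvanishing case translate and use (F4)+Fatou to drive $\J_{\mu_n}(u_n)/\|u_n\|^2\to-\infty$---matches the paper, and your Case~2 is essentially correct.

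The gap is in Case~1. In your ``streamlined'' version you apply \eqref{ineqForJ} with $v=0$ to get $\J_{\mu_n}(sw_n)\le C+o(1)$ for every fixed $s>0$, and then try to bound $\J_{\mu_n}(sw_n)$ from below by $\tfrac{s^2}{2}\big(c_0\|w_n^+\|^2-\|w_n^-\|^2(1+\tfrac{4\mu_n}{(N-2)^2})\big)+o(1)$, asserting that ``the $\J'$ estimates'' give $c_0\|w_n^+\|^2-\|w_n^-\|^2\ge c_1>0$. This claim is neither proved nor true in general: from $\J'_{\mu_n}(u_n)(u_n)=o(1)$, positivity of the Hardy term and of $\int f(x,u_n)u_n$, you only get $\|w_n^+\|^2-\|w_n^-\|^2\ge o(1)$, i.e.\ $\|w_n^\pm\|^2$ can both hover near $1/2$; since $c_0=\tfrac12\big(\mu(V)^+-\tfrac{4\mu}{(N-2)^2}\big)\le\tfrac12$ by \eqref{eqNormMuEst} and Lemma~\ref{LemMu}, the quantity $c_0\|w_n^+\|^2-\|w_n^-\|^2$ can be strictly negative. (There is also a cross term $\int\tfrac{\mu_n}{|x|^2}w_n^+w_n^-$ that your decomposition ignores.) You also oscillate between the dichotomy on $w_n^+$ and on $w_n$; these are not the same and you need to fix one.

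The paper's remedy is exactly the idea you touched on and then abandoned: apply \eqref{ineqForJ} with $t=s/\|u_n\|$ and $v=-sw_n^-\in X^-$, so that $tu_n+v=sw_n^+\in X^+$. The Cerami condition makes the correction term $\J'_{\mu_n}(u_n)\big(\tfrac{t^2-1}{2}u_n+tv\big)=o(1)$, hence $M\ge\J_{\mu_n}(u_n)\ge\J_{\mu_n}(sw_n^+)+o(1)$. Now the quadratic part is purely positive: $\J_{\mu_n}(sw_n^+)\ge\tfrac{s^2}{2}\|w_n^+\|_{\mu_n}^2-\int F(x,sw_n^+)\ge\tfrac{c_0s^2}{2}\|w_n^+\|^2-o(1)$ in the vanishing case for $w_n^+$. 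Since $2\|u_n^+\|^2\ge\|u_n\|^2+\J'_{\mu_n}(u_n)(u_n)$ gives $\limsup\|w_n^+\|^2\ge\tfrac12$, you obtain $M\ge\tfrac{c_0s^2}{4}$ for all $s>0$, the desired contradiction. Projecting onto $X^+$ before testing is the missing move; with it your sketch becomes the paper's proof.
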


\begin{proof}
Suppose that $(1+\|u_n\|)\J'_{\mu_n}(u_n)\to 0$, $\J_{\mu_n}(u_n)\leq M$ and $\|u_n\|\to\infty$ as $n\to\infty$. Let $v_n:=\frac{u_n}{\|u_n\|}$. We may assume that $v_n\rightharpoonup v$ in $X$ and $v_n(x)\to v(x)$ a.e. in $\R^N$. Moreover there is a sequence $(y_n)_{n\in\N}\subset\R^N$ such that
\begin{equation}\label{EqLemLions}
\liminf_{n\to\infty}\int_{B(y_n,1)}|v_n^+|^2\,dx >0.
\end{equation}
Otherwise, in view of Lions lemma (see \cite{Willem}[Lemma 1.21]) we get that $v_n^+\to 0$ in $L^t(\R^N)$ for $2<t<2^*$. 
By Lemma \ref{eqEstOfF} we obtain
$\int_{\R^N}F(x,sv_n^+)\,dx\to 0$ for any $s>0$. Let us fix any $s>0$. Observe that Lemma \ref{Lem_IneqForJ} implies that
$$\J_{\mu_n}(u_n)\geq \J_{\mu_n}(sv_n^+)+o(1)$$
and by \eqref{eqNormMuEst}
\begin{eqnarray}\label{EqIneq2}
M&\geq& \limsup_{n\to\infty}\J_{\mu_n}(u_n)\geq \limsup_{n\to\infty} \J_{\mu_n}(sv_n^+)\ge \frac{s^2}{2}\limsup_{n\to\infty}\|v_n^+\|^2_{\mu_n}\\
&\geq &\frac{s^2}{4}\Big(\mu(V)^+ -\frac{4\mu}{(N-2)^2}\Big) 
\limsup_{n\to\infty}\|v_n^+\|^2.\nonumber
\end{eqnarray}
Note that by \eqref{ARfor2}
$$\|u_n^+\|^2-\|u_n^-\|^2\geq\J_{\mu_n}'(u_n)(u_n).$$
Hence 
\begin{eqnarray*}
2\|u_n^+\|^2&\geq& \|u_n^+\|^2+\|u_n^-\|^2+\J_{\mu_n}'(u_n)(u_n)=\|u_n\|^2+\J_{\mu_n}'(u_n)(u_n)
\end{eqnarray*}
and, passing to a subsequence if necessary, $C:=\limsup_{n\to\infty}\|v_n^+\|^2>0$. Then
 by (\ref{EqIneq2})
$$M\geq \frac{s^2}{4}C\Big(\mu(V)^+ -\frac{4\mu}{(N-2)^2}\Big)$$
for any $s\geq0$ and the obtained contradiction shows that (\ref{EqLemLions}) holds. We may assume that $(y_n)_{n\in\N}\subset\Z^N$ and
$$\liminf_{n\to\infty}\int_{B(y_n,\rho)}|v_n^+|^2\,dx >0$$
for some $\rho>1$. Therefore passing to a subsequence $v_n^+(\cdot+y_n)\to v^+$ in $L^2_{loc}(\R^N)$ and $v^+\neq 0$. Note that if $v(x)\neq 0$ then $|u_n(x+y_n)|=|v_n(x+y_n)|\|u_n\|\to\infty$
and by (F4)
$$\frac{F(x,u_n(x+y_n))}{\|u_n\|^2}=\frac{F(x,u_n(x+y_n))}{|u_n(x+y_n)|^2}|v_n(x+y_n)|^2\to\infty$$
as $n\to\infty$.
Since $\J_{\mu_n}'(u_n)(u_n)\to 0$, then 
$$\|u_n^+\|^2-\|u_n^-\|^2 -\int_{\R^N}\frac{\mu_n}{|x|^2}|u_n|^2\,dx+o(1)=\int_{\R^N}f(x,u_n)u_n\,dx\geq 0$$
and $\frac{1}{\|u_n\|^2}\int_{\R^N}\frac{\mu_n}{|x|^2}|u_n|^2\,dx$ is bounded.
Therefore by the $\Z^N$-periodicity of $F$ in $x\in\R^N$ and by Fatou's lemma
\begin{eqnarray*}
0=\limsup_{n\to\infty}\frac{\J_{\mu_n}(u_n)}{\|u_n\|^2}&=&\limsup_{n\to\infty}\Big(\frac{1}{2}\big(\|v_n^+\|^2-\|v_n^-\|^2-\frac{1}{\|u_n\|^2}\int_{\R^N}\frac{\mu_n}{|x|^2}|u_n|^2\,dx\big)\\
&&-\int_{\R^N}\frac{F(x,u_n(x+y_n))}{\|u_n\|^2}\,dx\Big)\\
&=&-\infty.
\end{eqnarray*}
Thus we get a contradiction.
\end{proof}

Below we provide a decomposition of bounded Palais-Smale sequences of $\J_{\mu}$.
\begin{Lem}\label{LemStruwe}
Assume that $0\le\mu<\overline{\mu}$
and let $(u_n)$ be a bounded Palais-Smale sequence of $\J_{\mu}$ at level $c\ge0$, that is $\J_{\mu}'(u_n)\to 0$ and $\J_\mu(u_n)\to c$.
Then there is $k\geq 0$ and there are sequences
$(\bar{u}_i)_{i=0}^k\subset X$ and $(x_n^i)_{0\leq i\leq k}\subset \Z^N$ such that $x_n^0=0$,
$|x_n^i|\rightarrow +\infty, |x_n^i-x_n^j|\rightarrow +\infty, i\neq j, i,j=1,2,\cdots,k$, and passing to a subsequence, the following conditions hold:
\begin{eqnarray}%\label{EqSplit1}
&& \J'_{\mu}(\bar{u}_0)=0,\nonumber\\%\label{EqSplit2}
&& \J'_0(\bar{u}_i)=0\text{ and }\bar{u}_i\neq 0\text{ for } i=1,...,k,\nonumber\\%\label{EqSplit3}
%&& \text{ for } i=1,...,k,\\%\label{EqSplit4}
&& u_n-\sum_{i=0}^k \bar{u}_i(\cdot -x_n^i)\to0 \hbox{ in } X\hbox{ and }\|u_n\|^2\to\sum_{i=0}^k \|\bar{u}_i\|^2
\text{ as }n\to\infty,\nonumber\\%\label{EqSplit5}
%&& \|u_n\|^2\to\sum_{i=0}^k \|\bar{u}_i\|^2,
%\\%\label{EqSplit6}
&& c=\J_\mu(\bar{u}_0)+\sum_{i=1}^k
\J_0(\bar{u}_i).\label{EqSplitLast}
\end{eqnarray}
%where $\bar{u}_i(i=0,1,\cdots,k)$ is a nontrivial solution to (\ref{LimitingProblem}).\textbf{\textit{The sentence is added.}}
\end{Lem}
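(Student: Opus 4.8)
The statement is a Brezis--Lieb / concentration-compactness decomposition of the Struwe type adapted to the $\Z^N$-periodic setting with the inverse-square perturbation, and I would prove it by the now-standard inductive bubbling argument, the only new point being the way the Hardy term $-\mu/|x|^2$ is handled. The plan is as follows. Since $(u_n)$ is bounded, pass to a subsequence so that $u_n\weakto\bar u_0$ in $X$. The $\cT$-to-weak$^*$ continuity (A2), or rather a direct computation using (F1)--(F3) and local compactness of the Sobolev embedding, gives $\J'_\mu(\bar u_0)=0$; here the inverse-square term is harmless because $1/|x|^2\in L^1_{loc}$ away from the origin and the Hardy inequality \eqref{Hardyineq1} controls it near $0$, so the map $u\mapsto\int_{\R^N}\frac{\mu}{|x|^2}uv\,dx$ is weakly continuous on $X$. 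Set $v_n^1:=u_n-\bar u_0$. A Brezis--Lieb type splitting gives $\J_\mu(v_n^1)\to c-\J_\mu(\bar u_0)$ and $\J'_\mu(v_n^1)\to 0$ (again the Hardy term splits by Brezis--Lieb in the weighted space, using $v_n^1\weakto 0$). If $v_n^1\to 0$ in $X$ we stop with $k=0$. Otherwise, by Lions' lemma applied to $(v_n^1)$ there is $\rho>1$ and a sequence $(x_n^1)\subset\R^N$, which we may take in $\Z^N$, with $\liminf_n\int_{B(x_n^1,\rho)}|v_n^1|^2\,dx>0$, and necessarily $|x_n^1|\to\infty$ because $v_n^1\weakto 0$.

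The decisive step is the passage from a translated subsequence to the next bubble, and this is where the inverse-square potential enters nontrivially. Consider $w_n^1:=v_n^1(\cdot+x_n^1)\weakto\bar u_1\ne 0$. Because $|x_n^1|\to\infty$, Lemma \ref{LemIntegralConvergence} shows that for any fixed test function $\varphi$ the quantity $\int_{\R^N}\frac{\mu}{|x|^2}w_n^1(x)\varphi(x)\,dx=\int_{\R^N}\frac{\mu}{|x+x_n^1|^2}v_n^1\,\varphi(\cdot-x_n^1)\,dx\to 0$; combined with the $\Z^N$-periodicity of $V$ and $f$, this means that in the translated frame the operator $-\Delta+V-\mu/|x|^2$ converges to the \emph{constant-coefficient} (in the sense: no Hardy term) operator $-\Delta+V$, so the weak limit satisfies $\J'_0(\bar u_1)=0$, not $\J'_\mu$. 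This is exactly why the limit equation for $i\ge 1$ loses the singular term. One then forms $v_n^2:=v_n^1-\bar u_1(\cdot-x_n^1)$, proves (via Brezis--Lieb in the shifted frame, discarding the vanishing Hardy contribution by Lemma \ref{LemIntegralConvergence}) that $\J_0(v_n^2(\cdot+x_n^1))\to$ (the residual level) hence $\|v_n^2\|^2=\|v_n^1\|^2-\|\bar u_1\|^2+o(1)$ and $\J'_0(v_n^2(\cdot+x_n^1))\to 0$, together with $\J_\mu(v_n^2)\to c-\J_\mu(\bar u_0)-\J_0(\bar u_1)$, and iterates.

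To run the induction I would, at stage $i\ge 2$, apply Lions' lemma to $v_n^i$ to extract $(x_n^i)\subset\Z^N$ with $\liminf_n\int_{B(x_n^i,\rho)}|v_n^i|^2\,dx>0$; the mutual divergence $|x_n^i-x_n^j|\to\infty$ for $i\ne j$ follows because otherwise $v_n^i(\cdot+x_n^i)$ and $v_n^j(\cdot+x_n^j)$ would have overlapping concentration, contradicting that $\bar u_j$ has already been subtracted (standard orthogonality argument: $\|v_n^j\|^2$ decreases by $\|\bar u_j\|^2$ at each step). Termination after finitely many steps is forced by the uniform lower bound on $\|\bar u_i\|$: each nonzero $\bar u_i$ is a critical point of $\J_0$ (for $i\ge1$), and (A3)-type estimates coming from (F1)--(F3) and the equivalence of $\|\cdot\|_\mu$ with $\|\cdot\|$ give $\|\bar u_i\|\ge\delta>0$ for a constant $\delta$ independent of $i$; since $\sum_i\|\bar u_i\|^2\le\limsup_n\|u_n\|^2<\infty$ the process stops at some finite $k$, at which point $v_n^{k+1}\to 0$ in $X$, yielding the three displayed convergences and the energy identity \eqref{EqSplitLast}. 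The main obstacle, and the one place the argument genuinely differs from the $\mu=0$ case treated in \cite{MederskiNehari2014}, is the bookkeeping of the Hardy term in all the Brezis--Lieb splittings: one must check that $\int_{\R^N}\frac{\mu}{|x|^2}|u_n|^2\,dx$ decomposes as $\int_{\R^N}\frac{\mu}{|x|^2}|\bar u_0|^2\,dx+\sum_{i\ge1}\int_{\R^N}\frac{\mu}{|x|^2}|\bar u_i(\cdot-x_n^i)|^2\,dx+o(1)$ and that every summand with $i\ge 1$ tends to $0$ by Lemma \ref{LemIntegralConvergence}, so that it contributes nothing to the limiting energies of the bubbles — this is what makes the $\J_0$ (rather than $\J_\mu$) appear in \eqref{EqSplitLast}. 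The detailed proof is deferred to Section \ref{sec:Decomposition}.
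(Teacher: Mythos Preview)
Your proposal is correct and follows essentially the same route as the paper: weak limit $\bar u_0$ solving $\J'_\mu=0$, Brezis--Lieb splittings for the quadratic, Hardy and $F$-terms, Lions' dichotomy, then the crucial use of Lemma~\ref{LemIntegralConvergence} to kill the Hardy contribution in the translated frame so that each bubble $\bar u_i$ ($i\ge 1$) solves $\J'_0=0$, with termination via the uniform lower bound $\|\bar u_i\|\ge\rho_0$ for nontrivial critical points of $\J_0$. The only cosmetic difference is that the paper, in the vanishing case, does not assert $\J'_\mu(v_n^1)\to 0$ directly but instead tests $\J'_\mu(u_n)$ against $v_n^\pm$ and uses $\J'_\mu(\bar u_0)=0$ to force $\|v_n^\pm\|\to 0$; your formulation is equivalent once the Brezis--Lieb splitting for $f$ is established.
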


Since proof of Lemma \ref{LemStruwe} is technical we postpone it to Section \ref{sec:Decomposition}.

\begin{altproof}{Theorem \ref{ThMain1}}
Applying Theorem \ref{ThAbstract} we find  a Cerami sequence $(u_n)$ of $\J_{\mu}$ at level $c_{\mu}>0$ and in view Lemma \ref{Lem_PS}, $(u_n)$ is bounded in $X$. If $\mu=0$, then by Theorem \ref{ThAbstract} we obtain
$$\inf_{\cN_{0}}\J_0\geq c_0 >0$$
and Lemma \ref{LemStruwe} implies that
there is a nontrivial critical point  $u_0\in\cN_0$ of $\J_0$ such that $\J_0(u_0)=c_0$. Hence $u_0$ is a ground state of $\J_0$, that is
 $\J_0(u_0)=\inf_{\cN_0}\J_0$.
Now let us assume that $0<\mu<\frac{(N-2)^2}{4}\mu(V)^+$ and consider
$$
M(u_0):=\{u=t u_0+v \in X |\; v \in X^-,\;
\|u\|\leq R(u_0),\; t\geq0 \},
$$
where $R(u_0)$ is given by Lemma \ref{LemGeometry}.
Observe that, if $t_nu_0+v_n\weakto t_0u_0+v_0$ in $X$ and $t_nu_0+v_n\in M(u_0)$ for $n\geq 1$, then passing to a subsequence we may assume that $t_n\to t_0$, $v_n\weakto v_0$ in $L^2(\R^N,\frac{1}{|x|^2})$, $v_n(x)\to v_0(x)$ a.e. on $\R^N$. Hence $t_0u_0+v_0\in M(u_0)$ and by Fatou's lemma $$\limsup_{n\to\infty}\J_{\mu}(t_nu_0+v_n)\leq \J_{\mu}(t_0u_0+v_0).$$ 
Therefore $M(u_0)$ is weakly sequentially closed and $\J_{\mu}$ is weakly sequentially upper semicontinuous. Then $\J_{\mu}$ attains its maximum in $M(u_0)$, that is, there is
$t_0u_0+v_0\in M(u_0)$ such that
$$\J_{\mu}(t_0u_0+v_0)\geq \J_{\mu}(u)$$
for any $u\in M(u_0)$. Note that by Lemma \ref{LemGeometry}, $\J_{\mu}(t_0u_0+v_0)>0$ and hence $t_0u_0+v_0\neq 0$.
Define $h(s,u)=u$ for $s\in [0,1]$ and $u\in M(u_0)$ and note that (h1)-(h4) are satisfied, that is $h\in\Gamma(u_0)$.
Then in view of Lemma \ref{Lem_IneqForJ} we have
\begin{equation}
c_0=\J_0(u_0)\geq \J_0(t_0u_0+v_0) > \J_\mu(t_0u_0+v_0)=\max_{u\in M(u_0)}\J_\mu(h(1,u))\geq c_\mu.
\end{equation}
Thus by (\ref{EqSplitLast}) we get $k=0$ and $\J_\mu(\bar{u}_0)=c_\mu>0$. Therefore $\bar{u}_0$ is a nontrivial critical point of $\J_\mu$ and by Theorem \ref{ThAbstract} 
$$c_\mu=\inf_{\cN_\mu}\J_\mu.$$
\end{altproof}

\section{Proof of Theorem \ref{ThMain2}}\label{sec:ProofTh2}

\begin{Lem}\label{lem:elementsOnN}
If $0\leq \mu <\frac{(N-2)^2}{4}\mu(V)^+$ then for every $u\in X\setminus X^-$ there is $t>0$ and $v\in X^-$ such that $tu+v\in\cN_{\mu}$. If $X^-=\{0\}$ and $\mu<\bar \mu$ then for every $u\in X\setminus \{0\}$ there is $t>0$ such that $tu\in\cN_{\mu}$.
\end{Lem}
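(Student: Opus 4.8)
The plan is to reduce the claim to a statement about the single-variable maps obtained by restricting $\J_\mu$ to the model subspace $\widehat X(u):=\R^+ u\oplus X^-$, and to locate the required point $tu+v$ as a global maximizer of $\J_\mu$ on this subspace. First I would fix $u\in X\setminus X^-$, and replace $u$ by $u^+/\|u^+\|$ (after checking $u^+\neq 0$, which holds precisely because $u\notin X^-$); nothing is lost since $\widehat X(u)=\widehat X(u^+)$ up to positive scaling and membership of $\cN_\mu$ is what we want to produce. By Lemma \ref{LemGeometry} (i.e.\ (A3)--(A4)) applied to this $u$, there is $R=R(u)>r>0$ with $\J_\mu>0$ on the sphere $\|w\|=r$ in $X^+$ and $\J_\mu\le 0$ on $\partial M(u)$, where $M(u)$ is the set in \eqref{DefOfM}. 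Since $\J_\mu$ is weakly sequentially upper semicontinuous on $M(u)$ and $M(u)$ is weakly sequentially closed (here one uses, exactly as in the proof of Theorem \ref{ThMain1}, that along a weakly convergent sequence $t_nu+v_n$ one has $t_n\to t_0$, $v_n\weakto v_0$ in $L^2(\R^N,|x|^{-2}dx)$ and a.e., so Fatou applies to the Hardy term and the nonlinear term), $\J_\mu$ attains its supremum over $M(u)$ at some $w_0=t_0u+v_0$. Because $\J_\mu>0$ somewhere on $M(u)$ (on the small sphere in $X^+$, which meets $M(u)$ after rescaling $u$) while $\J_\mu\le 0$ on $\partial M(u)$, the maximizer $w_0$ lies in the relative interior of $M(u)$; in particular $t_0>0$ and $w_0\neq 0$.

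The second step is to upgrade ``interior maximizer of $\J_\mu$ on $M(u)$'' to ``maximizer on all of $\widehat X(u)=\{tu+v:\ t\ge 0,\ v\in X^-\}$'', and then to read off the Nehari--Pankov conditions. For the first point, suppose $tu+v\in\widehat X(u)$ with $\|tu+v\|>R(u)$; then one can join $w_0$ to $tu+v$ inside $\widehat X(u)$ and, since $\J_\mu(sw)\to-\infty$ as $\|w\|\to\infty$ along any half-line/affine slice of $\widehat X(u)$ (this is the same computation used in Lemma \ref{LemGeometry}, dominating $\J_\mu$ by $\J_0$ and using (F4)), the value $\J_\mu(tu+v)$ cannot exceed $\J_\mu(w_0)$ once $R(u)$ is chosen large enough — which it was. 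Hence $w_0$ maximizes $\J_\mu$ over $\widehat X(u)$. Now, since $w_0=t_0u+v_0$ with $t_0>0$ is an interior maximizer and $\J_\mu\in\cC^1$, the directional derivatives vanish: differentiating $s\mapsto\J_\mu(w_0+s\,v)$ at $s=0$ for each $v\in X^-$ gives $\J_\mu'(w_0)(v)=0$ for all $v\in X^-$, and differentiating $\tau\mapsto\J_\mu((t_0+\tau)u+v_0)$ at $\tau=0$ gives $\J_\mu'(w_0)(u)=0$, hence $\J_\mu'(w_0)(w_0)=t_0\J_\mu'(w_0)(u)+\J_\mu'(w_0)(v_0)=0$. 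Writing $w_0=t_0u^+ + (\text{element of }X^-)$ with $t_0>0$ shows $w_0\in X\setminus X^-$, so $w_0\in\cN_\mu$; rescaling back to the original $u$ produces the asserted $t>0$ and $v\in X^-$ with $tu+v\in\cN_\mu$.

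For the case $X^-=\{0\}$ (equivalently $\inf\sigma(-\Delta+V)>0$) and $\mu<\bar\mu$, the same argument degenerates to a genuinely one-dimensional one: here $\J_\mu(tu)=\tfrac{t^2}{2}\|u\|_\mu^2-\int_{\R^N}F(x,tu)\,dx$ (with $\|\cdot\|_\mu$ as in \eqref{eqNormMuEst}, now equivalent to $\|\cdot\|$ on all of $X$ because $X^+=X$ and the Hardy inequality \eqref{Hardyineq} gives $\|u\|_\mu^2\ge(1-4\mu/(N-2)^2)\|u\|^2>0$ for $u\neq 0$), so $g(t):=\J_\mu(tu)$ satisfies $g(0)=0$, $g(t)>0$ for small $t>0$ by Lemma \ref{eqEstOfF}, and $g(t)\to-\infty$ as $t\to\infty$ by (F4); thus $g$ attains a positive maximum at some $t>0$, where $g'(t)=0$, i.e.\ $\J_\mu'(tu)(tu)=0$, so $tu\in\cN_\mu$.

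The step I expect to be the main obstacle is the weak sequential upper semicontinuity / weak closedness of $\J_\mu$ on $M(u)$: the Hardy term $\int\mu|x|^{-2}|w|^2$ is not weakly continuous, so one must verify that passing to the weak limit can only \emph{increase} $\J_\mu$ (which requires $\mu\ge 0$, consistent with the hypothesis), and that the constrained sublevel geometry from Lemma \ref{LemGeometry} really forces the maximizer into the interior rather than onto $\partial M(u)$ — i.e.\ checking that the small sphere $\{\|w\|=r\}\cap X^+$, on which $\J_\mu>0$, indeed lies in $M(u)$ after the normalization $\|u^+\|=1$, so that $\sup_{M(u)}\J_\mu>0\ge\sup_{\partial M(u)}\J_\mu$.
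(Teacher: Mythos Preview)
Your proposal is correct and follows essentially the same approach as the paper: both arguments maximize $\J_\mu$ over the half-space $\R^+u^+\oplus X^-$, using weak sequential upper semicontinuity of $\J_\mu$ (which is where $\mu\ge 0$ enters, via Fatou on the Hardy term) together with the coercivity and linking geometry from Lemma~\ref{LemGeometry}, and then read off the Nehari--Pankov conditions from the fact that the maximizer is interior (i.e.\ $t>0$). The paper's version is more terse---it packages the argument as ``$\xi(t,v)=-\J_\mu(tu^++v)$ is weakly lower semicontinuous, bounded below and coercive, hence attains its infimum''---and cites (A5) in passing, but your explicit restriction to the weakly compact set $M(u)$ followed by the ``upgrade'' step is the same mechanism; note that the upgrade is in fact unnecessary, since an \emph{interior} maximizer on $M(u)$ already has vanishing directional derivatives along $u$ and along every $v\in X^-$, which is all that membership in $\cN_\mu$ requires.
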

\begin{proof}
Let $u\in X\setminus X^-$ and consider a map $\xi:\R^+\times X^-\to\R$ such that 
$$\xi(t,v)=-\J_{\mu}(tu^++v).$$
Similarly as in proof of Theorem \ref{ThMain1} we show that $\xi$ is weakly lower semicontinuous for $\mu\geq 0$. Since $\xi$ is bounded from below and coercive (see proof of Lemma \ref{LemGeometry}), then we find
$t\geq 0$ and $v\in X^-$ such that 
$$\J_{\mu}(tu+v)=\sup_{\R^+u^+\oplus X^-}\J_\mu,$$
where $\R^+u^+:=\{tu^+|\;t\geq 0\}$.
In view of Lemma \ref{LemGeometry}  and Lemma \ref{Lem_IneqForJ} (condition (A5)) we get $t>0$, hence
$t u+v\in\mathcal{N}_\mu$. Observe that if $X^-=\{0\}$, then $\xi$ is continuous, coercive and bounded from below for any $\mu\in\R$. Since the first inequality in (\ref{eqLinkingGeo}) actually holds for any $\mu<\bar \mu$, therefore there is $t>0$ such that $tu\in\cN_{\mu}$.
\end{proof}

\begin{altproof}{Theorem \ref{ThMain2}}\\
(a) Let $u_{\mu}\in\cN_\mu$ be a ground state of $\J_{\mu}$ and $-\frac{4}{(N-2)^2}\mu(V)^-<\mu <\frac{(N-2)^2}{4}\mu(V)^+$. 
In view of Lemma \ref{lem:elementsOnN} 
%\textit{(But we assume $\mu \ge 0$ in Lemma \ref{lem:elementsOnN}) 
there is $t> 0$ and $v\in X^-$ such that 
$t u_{\mu}+v\in\mathcal{N}_0$.
%\textbf{\textit{(Should be $\mathcal{N}_\mu$?, please see Lemma \ref{lem:elementsOnN})}}. 
Then by Lemma \ref{Lem_IneqForJ}
\begin{eqnarray}\label{Th2ineq1}
c_{\mu}&=&\J_{\mu}(u_{\mu})\geq \J_{\mu}(tu_{\mu}+v)=\J_{0}(tu_{\mu}+v)-\frac{1}{2}\int_{\R^N}\frac{\mu}{|x|^2}|tu_{\mu}+v|^2\,dx\\\nonumber
&\geq& c_0-\frac{1}{2}\int_{\R^N}\frac{\mu}{|x|^2}|tu_{\mu}+v|^2\,dx
\end{eqnarray}
and we obtain \eqref{ConditonOnmu2}.
Now, let us assume that $u_0\in\cN_0$ is a ground of $\J_{0}$. Similarly as above we show \eqref{ConditonOnmu1}, that is,   
\begin{eqnarray*}
c_{0}&=&\J_{0}(u_0)\geq \J_{0}(t'u_0+v')=\J_{\mu}(t'u_0+v')+\frac{1}{2}\int_{\R^N}\frac{\mu}{|x|^2}|t'u_0+v'|^2\,dx\\\nonumber
&\geq& c_\mu+\frac{1}{2}\int_{\R^N}\frac{\mu}{|x|^2}|t'u_0+v'|^2\,dx
\end{eqnarray*}
for any $0\leq\mu\leq \frac{(N-2)^2}{4}\mu(V)^+$ and some $t'> 0$ and $v'\in X^-$ such that 
$t'u+v'\in\mathcal{N}_\mu$. Observe that we get
$$c_0\geq c_\mu=\J_{\mu}(u_\mu)$$
and by Lemma \ref{Lem_PS} we have that $(u_\mu)$ is bounded if $\mu\to 0^+$. Take any sequence $\mu_n\to 0^+$ such that $\mu_n\leq\mu<\frac{4}{(N-2)^2}\mu(V)^+$ and denote $u_n:=u_{\mu_n}$. 
Then there is a sequence $(y_n)_{n\in\N}\subset\R^N$ such that
\begin{equation}\label{EqLemLions2}
\liminf_{n\to\infty}\int_{B(y_n,1)}|u_n^+|^2\,dx >0.
\end{equation}
Otherwise, in view of Lions lemma we get that $u_n^+\to 0$ in $L^t(\R^N)$ for $2<t<2^*$ and since $u_n\in \cN_{\mu_n}$, then
$$\|u_n^+\|^2=\int_{\R^N}\frac{\mu_n}{|x|^2}u_nu_n^+\,dx+\int_{\R^N}f(x,u_n)u_n^+\,dx\to 0$$
as $n\to\infty$. 
Hence $\limsup_{n\to\infty}J_{\mu_n}(u_n)\leq 0$, but this contradicts the following inequalities 
$$\J_{\mu_n}(u_n)\geq \J_{\mu_n}\Big(\frac{r}{\|u_n^+\|}u_n^+\Big)\geq  
\inf_{n\in \N}\inf_{u\in X^+,\;\|u\|=r}\J_{\mu_n}(u)>0,$$
for sufficiently small $r>0$, where the last inequality follows from similar arguments provided in Lemma \ref{LemGeometry}. Therefore \eqref{EqLemLions2} holds and we may assume that $(y_n)_{n\in\N}\subset\Z^N$ and
$$\liminf_{n\to\infty}\int_{B(y_n,\rho)}|u_n^+|^2\,dx >0$$
for some $\rho>1$. Therefore passing to a subsequence we find $u\in X$ such that $u_n^+(\cdot+y_n)\to u^+$ in $L^2_{loc}(\R^N)$ and $u^+\neq 0$. Moreover we may assume that $$u_n(\cdot+y_n)\weakto u\hbox{ in }X\hbox{ and }u_n(x+y_n)\to u(x),\;u_n^+(x+y_n)\to u^+(x)\hbox{ a.e. on }\R^N.$$ 
Let $t_nu_n+v_n\in\cN_0$ and $t_n>0$, $v_n\in X^-$. Then by \eqref{ARfor2}
\begin{eqnarray}\label{ineqTh2}
\|u_n^+\|^2&=&\|u_n^-+v_n/t_n\|^2+\frac{1}{t_n^2}\int_{\R^N}f(x,t_nu_n+v_n)(t_nu_n+v_n)\,dx\\\nonumber
&\geq&\|u_n^-+v_n/t_n\|^2+2\int_{\R^N}\frac{F(x,t_n(u_n+v_n/t_n))}{t_n^2}\,dx. 
\end{eqnarray}
Therefore $\|u_n^-+v_n/t_n\|$ is bounded and we may assume that $u_n^-(x)+v_n(x)/t_n\to v(x)$ a.e. on $\R^N$ for some $v\in X^-$. Hence, if $t_n\to\infty$, then   
$|t_nu_n(x)+v_n(x)|=t_n|u_n(x)+v_n(x)/t_n|\to \infty$ provided that $u^+(x)+v(x)\neq 0$. In view of Fatou's lemma and by (F4)
$$\int_{\R^N}\frac{F(x,t_n(u_n+v_n/t_n))}{t_n^2}\,dx\to\infty,$$
which contradicts \eqref{ineqTh2}. Therefore $t_n$ is bounded, thus  
$\|t_nu_n^+\|$ and $\|t_nu_n^-+v_n\|$ are bounded and
by the Hardy inequality
$$\frac{1}{2}\int_{\R^N}\frac{\mu_n}{|x|^2}|t_nu_{n}+v_n|^2\,dx\to 0$$
as $n\to\infty$. Therefore by \eqref{Th2ineq1}
we get \eqref{ThMain2conv}.\\
(b) Let $(u_n)$ be a sequence of ground states of $\J_{\mu_n}$ as in (a) and take $x_n:=y_n$.  For $(x,u)\in\R^N\times\R$
$$G(x,u):=\frac{1}{2}f(x,u)u-F(x,u).$$
Observe that for any $\phi\in X$
\begin{eqnarray*}
\J_{0}'(u_n(\cdot+x_n))(\phi)
&=&\J_{\mu_n}'(u_n)(\phi(\cdot-x_n))+\int_{\R^N}\frac{\mu_n}{|x|^2}u_n\phi(\cdot-x_n)\, dx\\
&\to& 0 
\end{eqnarray*}
as $n\to\infty$. In view of the Vitali convergence theorem $\J_{0}'(u_n(\cdot+x_n))(\phi)\to \J_{0}'(u)(\phi)$. Thus $u$ is a nontrivial critical point of $\J_0$. Observe that by \eqref{ThMain2conv} and Fatou's lemma 
\begin{eqnarray}\label{EqConvG}
c_0&=&\liminf_{n\to\infty}\J_{\mu_n}(u_n)=\liminf_{n\to\infty}\Big(\J_{\mu_n}(u_n)-\frac{1}{2}\J_{\mu_n}'(u_n)(u_n)\Big)\\\nonumber
&=&\liminf_{n\to\infty}\int_{\R^N}G(x,u_n)\,dx
=\liminf_{n\to\infty}\int_{\R^N}G(x,u_n(x+x_n))\,dx
\geq \int_{\R^N}G(x,u)\,dx\\\nonumber
&=&\J_0(u)\geq c_0.
\end{eqnarray}
Thus we obtain that $u$ is a ground state of $\J_0$. Let us denote $w_n:=u_n(\cdot+x_n)$ and
observe that
\begin{eqnarray}\label{EqBrezisLieb}
\int_{\R^N}G(x,w_n)-G(x,w_n-u)\, dx
&=&\int_{\R^N}\int_0^1\frac{d}{dt}G(x,w_n-u+tu)\, dtdx\\\nonumber
&=&\int_0^1\int_{\R^N}g(x,w_n-u+tu)u\, dxdt,
\end{eqnarray}
where $g(x,u):=\partial_u G(x,u)$ for $u\in\R$ and a.e. $x\in\R^N$.
Since $(w_n-u+tu)$ is bounded in $X$, then
for any $\eps>0$ there is $\delta>0$
such that for any $\Omega$ with the Lebesgue measure $|\Omega|<\delta$, we have
$$\int_{\Omega}|g(x,w_n-u+tu)u|\, dx < \eps$$
for any $n\in\N$.
Thus $(g(x,w_n-u+tu)u)$ is uniformly integrable.
Moreover for any $\eps>0$ there is $\Omega\subset\R^N$,
$|\Omega|<+\infty$,
such that for any $n\in\N$
$$\int_{\R^N\setminus\Omega}|g(x,w_n-u+tu)u|\, dx < \eps.$$
Hence a family $(g(x,w_n-u+tu)u)$ is tight over $\R^N$.
Since $g(w_n-u+tu)u\to g(tu)u$ a.e. in $\R^N$, then
in view of the Vitali convergence theorem $g(x,tu)u$ is integrable and
$$\int_{\R^N}g(x,w_n-u+tu)u\, dx\to \int_{\R^N}g(x,tu)u\, dx$$
as $n\to\infty$.
By (\ref{EqBrezisLieb}) we obtain
$$\int_{\R^N}G(x,w_n)-G(x,w_n-u)\, dx\to
\int_0^1\int_{\R^N}g(x,tu)u\, dxdt=\int_{\R^N}G(x,u)\, dx$$
as $n\to\infty$.
Taking into account (\ref{EqConvG}) we get
$$\lim_{n\to\infty}\int_{\R^N}G(x,w_n-u_0)\, dx=0,$$
and by (F6) we have $w_n\to u$ in $L^q(\R^N)$. Since $(w_n)$ is bounded in $L^2(\R^N)$ and in $L^{2^*}(\R^N)$, then by the interpolation inequalities $w_n\to u$ in $L^t(\R^N)$ for $2<t<2^*$. Thus
\begin{eqnarray*}
\|w_n^+-u^+\|^2&=&\J_{\mu_n}'(u_n)((w_n^+-u^+)(\cdot-x_n))-\langle u^+,w_n^+-u^+\rangle\\
&&+\int_{\R^N}\frac{\mu_n}{|x|^2}u_n(w_n^+-u^+)(x-x_n)\,dx
+\int_{\R^N}f(x,w_n)(w_n^+-u^+)\,dx\\
&\to& 0
\end{eqnarray*}
and
\begin{eqnarray*}
\|w_n^--u^-\|^2&=&-\J_{\mu_n}'(u_n)((w_n^--u^-)(\cdot-x_n))-\langle u^-,w_n^--u^-\rangle\\
&&-\int_{\R^N}\frac{\mu_n}{|x|^2}u_n(w_n^--u^-)(x-x_n)\,dx
-\int_{\R^N}f(x,w_n)(w_n^--u^-)\,dx\\
&\to& 0
\end{eqnarray*}
as $n\to\infty$. Therefore $w_n\to u$ in $X$.\\
(c) Suppose that $\mu<0<\inf\sigma(-\Delta+V)$ and $u_\mu$ is a ground state of $\J_{\mu}$ and $u_0$ is a ground state of $\J_{0}$. Then $X^-=\{0\}$, $\mu(V)^-=\infty$ and by \eqref{ConditonOnmu2} we have 
\begin{equation}\label{ineqTh2c0mu}
c_0<c_{\mu}, 
\end{equation}
since
$t u_{\mu}+v\in\mathcal{N}_0$ and $t u_{\mu}+v\neq 0$.
In view of Lemma \ref{lem:elementsOnN}, for any $y\in\Z^N$, we find  $t_y>0$ such that $t_y u_0(\cdot+y)\in\mathcal{N}_\mu$. Then by \eqref{ARfor2} and (F4)
\begin{eqnarray}\label{eqOnN}
\|u_0\|^2-\int_{\R^N}\frac{\mu}{|x|^2}|u_0(\cdot+y)|^2\, dx
&=&\frac{1}{t_y^2}\int_{\R^N}f(x,t_y u_0)t_yu_0\, dx\\\nonumber
&\geq& 2\int_{\R^N}\frac{F(x,t_y u_0)}{t_y^2}\, dx, 
\end{eqnarray}
where the last integral tends to $\infty$ as $t_y\to\infty$. Therefore $(t_{y_n})$ is bounded if $|y_n|\to\infty$ and we may assume that $t_{y_n}\to t_0\geq 0$.
Observe that by Lemma \ref{LemIntegralConvergence}
$$\liminf_{n\to\infty}\Big(\frac{1}{2}t_{y_n}^2\|u_0\|^2\Big)\geq \liminf_{n\to\infty}\J_{\mu}(t_{y_n}u_0(\cdot+y_n))\geq c_{\mu}>0,$$
hence $t_0>0$.
Again by Lemma \ref{LemIntegralConvergence} 
we get
\begin{eqnarray*}
\J_{0}'(t_{y_n}u_0)(u_0)&=&\J_{0}'(t_{y_n}u_0(\cdot+y_n))(u_0(\cdot+y_n))\\
&=&\J_{\mu}'(t_{y_n}u_0(\cdot+y_n))(u_0(\cdot+y_n))
+\int_{\R^N}\frac{\mu}{|x|^2}|u_0(x+y_n)|^2t_{y_n}\,dx\\
&\to& 0. 
\end{eqnarray*}
Thus $\J_{0}'(t_0u_0)(u_0)=0$ and $t_0u_0\in\cN_0$. Note that
by Lemma \ref{LemIntegralConvergence} and Lemma \ref{Lem_IneqForJ} (condition (A5)) we obtain
\begin{eqnarray*}
c_\mu&\leq& \lim_{n\to\infty}\J_\mu(t_{y_n}u_0(\cdot+y_n))\\
&=& \J_0(t_{0}u_0) = \J_0(u_0) = c_{0},
\end{eqnarray*}
which contradicts \eqref{ineqTh2c0mu}.
\end{altproof}

\section{Decomposition of bounded Palais-Smale sequences}\label{sec:Decomposition}

In this section we obtain a variant of \cite{JeanjeanTanakaIndiana2005}[Theorem 5.1] for bounded Palais-Smale sequences, hence also for Cerami sequences, of strongly indefinite functionals involving sum of periodic and inverse square potentials. 
%Observe that Jeanjean and Tanaka \cite{JeanjeanTanakaIndiana2005} dealt with significantly simpler situation, where $B_{0}(\cdot,\cdot)$ given by \eqref{eq:DefOfB} with $V\in \cC(\R^N,\R)$ was positive definite.

\begin{altproof}{Lemma \ref{LemStruwe}}
Let $(u_n)$ be a bounded Palais-Smale sequence of $\J_{\mu}$ at $c\ge0$. Then $(u_n)$ is bounded in $L^2\big(\mathbb{R}^N,\frac{1}{|x|^2}\big)$ by (\ref{Hardyineq1}) and we may assume that
\begin{eqnarray*}
u_n&\rightharpoonup& \overline{u}_0~ \text{in}~X,\\
%u_n^+&\rightharpoonup& \overline{u}_0^+,~ \text{in}~X^+,\\
%u_n^-&\rightharpoonup& \overline{u}_0^-,~ \text{in}~X^-,\\
u_n&\rightharpoonup& \overline{u}_0~ \text{in}~L^2\Big(\mathbb{R}^N,\frac{1}{|x|^2}\Big),\\
u_n&\rightarrow& \overline{u}_0~ \text{in}~L_{loc}^2(\mathbb{R}^N),\\
u_n&\rightarrow& \overline{u}_0~ a.e.~ \text{on}~\mathbb{R}^N.
\end{eqnarray*} 
Then $\J'_{\mu}(\overline{u}_0)=0$.
Denote $v_n=u_n-\overline{u}_0$. Then $v_n^+\rightharpoonup 0$ in $X^+,$ $v_n^-\rightharpoonup 0$ in $X^-,$ and $v_n\rightharpoonup 0$ in $L^2\big(\mathbb{R}^N,\frac{1}{|x|^2}\big)$, hence
\begin{eqnarray}\label{Eq-PS1-LemStruwe}
\|v_n^+\|^2&=&\|u_n^+\|^2-\|\overline{u}_0^+\|^2+o(1),\;\|v_n^-\|^2=\|u_n^-\|^2-\|\overline{u}_0^-\|^2+o(1),\\\label{Eq-PS1-LemStruwe-add1}
\|v_n\|^2&=&\|u_n\|^2-\|\overline{u}_0\|^2+o(1),
\\\label{Eq-PS2-LemStruwe}
\int_{\mathbb{R}^N}\frac{v_n^2}{|x|^2}&=&\int_{\mathbb{R}^N}\frac{u_n^2}{|x|^2}-\int_{\mathbb{R}^N}\frac{\overline{u}_0^2}{|x|^2}+o(1).
\end{eqnarray}
Now we prove that 
\begin{eqnarray}\label{Eq-PS3-LemStruwe}
&&\int_{\mathbb{R}^N}F(x,v_n)=\int_{\mathbb{R}^N}F(x,u_n)-\int_{\mathbb{R}^N}F(x,\overline{u}_0)+o(1).
\end{eqnarray}
In fact, by (F2), (F3), Vitali's theorem implies 
\begin{eqnarray*}
&&\int_{\mathbb{R}^N}F(x,u_n)-F(x,u_n-\overline{u}_0)\\
&=&-\int_{\mathbb{R}^N}\int_0^1 \frac{d}{d\theta}F(x,u_n-\theta\overline{u}_0)=\int_{\mathbb{R}^N}\int_0^1 f(x,u_n-\theta\overline{u}_0)\overline{u}_0\\
&\rightarrow& \int_{\mathbb{R}^N}\int_0^1 f(x,\overline{u}_0-\theta\overline{u}_0)\overline{u}_0=\int_{\mathbb{R}^N}\int_0^1 \frac{d}{d\theta}F(x,\overline{u}_0-\theta\overline{u}_0)=\int_{\mathbb{R}^N}F(x,\overline{u}_0).
\end{eqnarray*}
Therefore (\ref{Eq-PS1-LemStruwe}), (\ref{Eq-PS2-LemStruwe}) and (\ref{Eq-PS3-LemStruwe}) give that 
\begin{eqnarray}\label{Eq-PS4-LemStruwe}
\J_{\mu}(v_n)&=&\J_{\mu}(u_n)-\J_{\mu}(\overline{u}_0)+o(1).
\end{eqnarray} 

Now we distinguish two cases.

{\em Case 1.}  $\lim\limits_{n\rightarrow\infty}\sup\limits_{y\in\mathbb{R}^N}\int_{B(y,1)}|v_n|^2\,dx=0.$\\
In view of Lion's lemma, we have $v_n\to 0$ in $L^t(\mathbb{R}^N)$ for $2<t<2^*$. Since the orthogonal projection of $X$ on $X^+$
is continuous in the $L^t$-norm \cite{Troestler}, then
$v_n^+\rightarrow0$ and  $v_n^-\rightarrow0$ in $L^t(\mathbb{R}^N)$ for $2<t<2^*$.
Moreover using $\J'_{\mu}(u_n)=o(1)$ and $\J'_{\mu}(\overline{u}_0)=0$ we obtain
\begin{eqnarray*}
o(1)&=&\J'_{\mu}(u_n)v_n^+\\
&=&\int_{\mathbb{R}^N}\nabla u_n\nabla v_n^++V(x)u_n v_n^+\,dx-\mu\int_{\mathbb{R}^N} \frac{u_n v_n^+}{|x|^2}\,dx-\int_{\mathbb{R}^N}f(x,u_n)v_n^+\,dx\\
%&=&\|v_n^+\|^2-\mu\int_{\mathbb{R}^N} \frac{v_nv_n^+}{|x|^2}+\int_{\mathbb{R}^N}\nabla v_n^+\nabla \overline{u}_0+V(x)v_n^+ \overline{u}_0-\mu\int_{\mathbb{R}^N} \frac{v_n^+ \overline{u}_0}{|x|^2}-\int_{\mathbb{R}^N}f(x,u_n)v_n^+\\
&=&\|v_n^+\|^2-\mu\int_{\mathbb{R}^N} \frac{v_nv_n^+}{|x|^2}+\J'_{\mu}(\overline{u}_0)v_n^++\int_{\mathbb{R}^N}f(x,\overline{u}_0)v_n^+-\int_{\mathbb{R}^N}f(x,u_n)v_n^+\\
&=&\|v_n^+\|^2-\mu\int_{\mathbb{R}^N} \frac{v_nv_n^+}{|x|^2}+\int_{\mathbb{R}^N}f(x,\overline{u}_0)v_n^+-\int_{\mathbb{R}^N}f(x,u_n)v_n^+,
\end{eqnarray*}
which combining with (\ref{Hardyineq1}) implies that
\begin{equation}\label{EqLemStruwe1}
\Big(1-\frac{4\mu}{(N-2)^2}\Big)\|v_n^+\|^2\leq\mu\int_{\mathbb{R}^N} \frac{v_n^+v_n^-}{|x|^2}+\int_{\mathbb{R}^N}f(x,u_n)v_n^+-\int_{\mathbb{R}^N}f(x,\overline{u}_0)v_n^++o(1).
\end{equation}
Similarly, we have
\begin{eqnarray*}
o(1)&=&\J'_{\mu}(u_n)v_n^-\\
&=&-\|v_n^-\|^2-\mu\int_{\mathbb{R}^N} \frac{v_nv_n^-}{|x|^2}\,dx+\int_{\mathbb{R}^N}f(x,\overline{u}_0)v_n^-\,dx-\int_{\mathbb{R}^N}f(x,u_n)v_n^-\,dx
\end{eqnarray*}
and
\begin{equation}\label{EqLemStruwe1addadd1}
\|v_n^-\|^2\leq-\mu\int_{\mathbb{R}^N} \frac{v_n^+v_n^-}{|x|^2}\,dx+\int_{\mathbb{R}^N}f(x,\overline{u}_0)v_n^-\,dx-\int_{\mathbb{R}^N}f(x,u_n)v_n^-\,dx+o(1).
\end{equation}
Hence
\begin{eqnarray}\label{EqLemStruwe1addadd2}
\Big(1-\frac{4\mu}{(N-2)^2}\Big)\|v_n\|^2
&\leq&\int_{\mathbb{R}^N}f(x,\overline{u}_0)v_n^--\int_{\mathbb{R}^N}f(x,u_n)v_n^-\,dx\\
&&+\int_{\mathbb{R}^N}f(x,u_n)v_n^+\,dx-\int_{\mathbb{R}^N}f(x,\overline{u}_0)v_n^+\,dx+o(1).\nonumber
\end{eqnarray}
Since
$v_n^+\rightarrow0$ and  $v_n^-\rightarrow0$ in $L^t(\mathbb{R}^N)$ for $2<t<2^*$, then $v_n\to 0$ in $X$  
and we end the proof with $k=0$.

{\em Case 2.} There is a sequence $(y_n)\subset\Z^N$ such that
\begin{equation}\label{Case2LemStruwe1}
\liminf\limits_{n\rightarrow\infty}\int_{B(y_n,\rho)}|v_n|^2\,dx>0
\end{equation}
for some $\rho>1$.
Passing to a subsequence we may assume that $|y_n|\rightarrow+\infty$. 
Let $\widehat{u}_n=u_n(x+y_n)$ and note that by (\ref{Case2LemStruwe1})
we find $\overline{u}_1\neq 0$ such that up to a subsequence
\begin{eqnarray*}
\widehat{u}_n&\rightharpoonup& \overline{u}_1~ \text{in}~X,\\
%\widehat{u}_n^+&\rightharpoonup& \overline{u}_1^+,~ \text{in}~X^+,\\
%\widehat{u}_n^-&\rightharpoonup& \overline{u}_1^-,~ \text{in}~X^-,\\
\widehat{u}_n&\rightarrow& \overline{u}_1~ \text{in}~L_{loc}^2(\mathbb{R}^N),\\
\widehat{u}_n&\rightarrow& \overline{u}_1~ a.e.~ \text{on}~\mathbb{R}^N.
\end{eqnarray*}
In view of the H\"older inequality and Lemma \ref{LemIntegralConvergence} 
we get for any $\phi\in X$
$$\int_{\R^N}\frac{1}{|x|^2}u_n(x)\phi(x-y_n)\,dx\to 0\hbox{ as }n\to\infty.$$
Then
\begin{eqnarray*}
o(1)&=&\J_\mu'(u_n)(\phi(x-y_n))\\
&=&\int_{\mathbb{R}^N}\nabla u_n\nabla\phi(x-y_n)+V(x)u_n\phi(x-y_n)\,dx-\int_{\R^N}\frac{\mu}{|x|^2}u_n\phi(x-y_n)\,dx\\
&&-\int_{\mathbb{R}^N} f(x,u_n)\phi(x-y_n)\,dx\\
&=&\int_{\mathbb{R}^N}\nabla\widehat{u}_n\nabla\phi+V(x)\widehat{u}_n\phi\,dx-\int_{\mathbb{R}^N} f(x,\widehat{u}_n)\phi\,dx+o(1)\\
&=&\J'_0(\widehat{u}_n)(\phi)+o(1)
\end{eqnarray*}
which implies that $\J'_0(\overline{u}_1)(\phi)=0$ and $\overline{u}_1$ is a critical point of $\J_0$.
Now denote 
$$z_n:=u_n-\overline{u}_0-\overline{u}_1(\cdot-y_n)$$ 
and since $\overline{u}_1(\cdot-y_n)\rightharpoonup 0$ in $X$
then
\begin{eqnarray}\label{Energyfunc-Zn1}
&&\|z_n^+\|^2=\|u_n^+\|^2-\|\overline{u}_0^+\|^2-\|\overline{u}_1^+\|^2+o(1),~
\|z_n^-\|^2=\|u_n^-\|^2-\|\overline{u}_0^-\|^2-\|\overline{u}_1^-\|^2+o(1),\\\label{Energyfunc-Zn1-add1}
&&
\|z_n\|^2=\|u_n\|^2-\|\overline{u}_0\|^2-\|\overline{u}_1\|^2+o(1).
\end{eqnarray}
In view of Lemma \ref{LemIntegralConvergence}
\begin{eqnarray}\label{Energyfunc-Zn3}
\int_{\mathbb{R}^N}\frac{z_n^2}{|x|^2}\,dx&=&\int_{\mathbb{R}^N}\frac{(u_n-\overline{u}_0)^2}{|x|^2}\,dx-\int_{\mathbb{R}^N}\frac{\overline{u}_1^2(x-y_n)}{|x|^2}\,dx+o(1)\\\nonumber
&=&\int_{\mathbb{R}^N}\frac{u_n}{|x|^2}\,dx-\int_{\mathbb{R}^N}\frac{\overline{u}_0^2}{|x|^2}\,dx+o(1).
\end{eqnarray}
Observe that $z_n(x+y_n)\to \overline{u}_1(x)$ a.e. on $\R^N$ and by Vitali's theorem and (\ref{Eq-PS3-LemStruwe}) we get
\begin{eqnarray}\label{Energyfunc-Zn6}
\int_{\mathbb{R}^N}F(x,z_n)\,dx&=&\int_{\mathbb{R}^N}F(x,z_n(x+y_n))\,dx\\
&=&\int_{\mathbb{R}^N}F(x,u_n-\overline{u}_0)\,dx-\int_{\mathbb{R}^N}F(x,\overline{u}_1)\,dx+o(1)\nonumber\\
&=&\int_{\mathbb{R}^N}F(x,u_n)\,dx-\int_{\mathbb{R}^N}F(x,\overline{u}_0)\,dx-\int_{\mathbb{R}^N}F(x,\overline{u}_1)\,dx+o(1).\nonumber
\end{eqnarray}
Now by (\ref{Energyfunc-Zn1})-(\ref{Energyfunc-Zn6}), we have
\begin{equation}\label{Energyfunc-Zn7}
\J_{\mu}(z_n)=\J_{\mu}(u_n)-\J_{\mu}(\overline{u}_0)-\J_0(\overline{u}_1)+o(1)
\end{equation} 
and we take $x_n^1:=y_n$.
Now we replace $v_n$ by $z_n$ and repeat the above arguments in {\em Case 1} and {\em Case 2}, that is if
\begin{equation}
\label{eq:Case1}
\lim\limits_{n\rightarrow\infty}\sup\limits_{y\in\mathbb{R}^N}\int_{B(y,1)}|z_n|^2\,dx=0,
\end{equation}
then $z_n\to 0$ in $X$ and in view of \eqref{Energyfunc-Zn1} and \eqref{Energyfunc-Zn7} we take $k=1$. Otherwise as in {\em Case 2} we find $(y_n)\subset\Z^N$ such that
\eqref{Case2LemStruwe1} holds for $(z_n)$. Then passing to a subsequence $|y_n|\to\infty$ and $|y_n-x_n^1|\to\infty$ as $n\to\infty$.
Similarly as above $\widehat{u}_n=u_n(x+y_n)$ and
we find $\overline{u}_2\neq 0$ such that up to a subsequence
\begin{eqnarray*}
\widehat{u}_n&\rightharpoonup& \overline{u}_2~ \text{in}~X,\\
%\widehat{u}_n^+&\rightharpoonup& \overline{u}_1^+,~ \text{in}~X^+,\\
%\widehat{u}_n^-&\rightharpoonup& \overline{u}_1^-,~ \text{in}~X^-,\\
\widehat{u}_n&\rightarrow& \overline{u}_2~ \text{in}~L_{loc}^2(\mathbb{R}^N),\\
\widehat{u}_n&\rightarrow& \overline{u}_2~ a.e.~ \text{on}~\mathbb{R}^N,
\end{eqnarray*}
$\overline{u}_2$ is a critical point of $\J_0$.
Now denote new $z_n:=u_n-\overline{u}_0-\overline{u}_1(\cdot-x_n^1)-\overline{u}_2(\cdot-y_n)$ and 
similarly to %(\ref{Zero-weakconvergence1}),  (\ref{Zero-weakconvergence2}) and (\ref{Energyfunc-Zn1}), 
(\ref{Energyfunc-Zn1}), (\ref{Energyfunc-Zn1-add1}) and (\ref{Energyfunc-Zn7}) we obtain
\begin{eqnarray*}
\|z_n^+\|^2&=&\|u_n^+\|^2-\|\overline{u}_0^+\|^2-\|\overline{u}_1^+\|^2-\|\overline{u}_2^+\|^2+o(1),\\%\label{Zero-weakconvergence6}
\|z_n^-\|^2&=&\|u_n^-\|^2-\|\overline{u}_0^-\|^2-\|\overline{u}_1^-\|^2-\|\overline{u}_2^-\|^2+o(1),\\%\label{Zero-weakconvergence7}
\|z_n\|^2&=&\|u_n\|^2-\|\overline{u}_0\|^2-\|\overline{u}_1\|^2-\|\overline{u}_2\|^2+o(1),\\%\label{Zero-weakconvergence8}
\J_{\mu}(\overline{z}_n)&=&\J_{\mu}(u_n)-\J_{\mu}(\overline{u}_0)-\J_0(\overline{u}_1)-\J_0(\overline{u}_2)+o(1),
\end{eqnarray*}
and $x_n^2:=y_n$. Again we repeat the above arguments in {\em Case 1} and {\em Case 2} and the iterations must stop after finite steps, since there is a constant $\rho_0>0$ such that
\begin{equation}\label{eqEstimJ_0Critical}
\|u\|\geq \rho_0\hbox{ for any }u\neq 0\hbox{ such that }\J_0'(u)=0.
\end{equation}
Indeed $\J'_0(u)(u^+)=0$, (F2) and (F3) imply that for any $\eps>0$ there is
a constant $C_1>0$ such that 
\begin{eqnarray*}
\|u^+\|^2\leq\int_{\R^N}|f(x,u)u^+|\,dx\leq \eps\|u^+\|\|u\|+C_1\|u^+\|\|u\|^{p-1}.
\end{eqnarray*}
Similarly by $\J'_0(u)(u^-)=0$ we get a constant $C_2>0$ such that
\begin{eqnarray*}
\|u^-\|^2\leq \int_{\R^N}|f(x,u)u^-|\,dx\leq \eps\|u^-\|\|u\|+C_2\|u^-\|\|u\|^{p-1}.
\end{eqnarray*}
Hence
\begin{eqnarray*}
\|u\|^2\leq 2\eps\|u\|^2+2\max\{C_1,C_2\}\|u\|^{p}
\end{eqnarray*}
and \eqref{eqEstimJ_0Critical} is satisfied, which completes the proof.
\end{altproof}

{\bf Acknowledgements.} The first author was supported by the National Natural Science Foundation of China (Grant Nos.11271299, 11001221) and the
Fundamental Research Funds for the Central Universities (Grant No. 3102015ZY069). The second author was partially supported by the NCN Grant no. 2014/15/D/ST1/03638 and he would like to thank the members of the Department of Applied Mathematics of the Northwestern Polytechnical University in Xi'an, where part of this work has been done, for their invitation and hospitality.

{\sc Address of the authors:}\\[1em]
\parbox{8.5cm}{Qianqiao Guo\\
 Department of Applied Mathematics\\
 Northwestern Polytechnical University,\\
 Postbox 894\\
 710129 Xi'an\\
 China\\
 gqianqiao@nwpu.edu.cn}
\parbox{11cm}{\vspace{5mm}Jaros\l aw Mederski\\
 Faculty of Mathematics and Computer Science\\
 Nicolaus Copernicus University \\
 ul.\ Chopina 12/18\\
 87-100 Toru\'n\\
 Poland\\
 jmederski@mat.umk.pl\\
 }

\end{document}